\theoremstyle{plain}
  \newtheorem{theorem}{Theorem}[section]
  \newtheorem{corollary}[theorem]{Corollary}
  \newtheorem{proposition}[theorem]{Proposition}
\theoremstyle{definition}
  \newtheorem{definition}[theorem]{Definition}
\theoremstyle{remark}
  \newtheorem{remark}[theorem]{Remark}
  \newtheorem*{ack}{Acknowledgment}
\numberwithin{equation}{section}
\def\supp{\mathop{\mathrm{supp}}\nolimits}
\begin{document}
\title[]{Invariant metric under deformed Markov  embeddings\\ with overlapped supports}
\author[]{Hiroshi Matsuzoe $^{\flat}$}
\author[]{Asuka Takatsu$^{\dagger}$}
\date{\today}
\thanks{2020 {\it Mathematics Subject Classification.} 53B12, 62B05}
\keywords{Information geometry,  Fisher metric, \v{C}encov's theorem, Markov embedding.}
%
%\subjclass[2010]{53B12, 62B05}
%%%
\thanks{$\flat$
Department of Computer Science, Nagoya Institute of Technology, Nagoya 466-8555, Japan
({\sf matsuzoe@nitech.ac.jp})}
%%%
\thanks{$\dagger$
Department of Mathematical Sciences, Tokyo Metropolitan University, Tokyo 192-0397, Japan
({\sf asuka@tmu.ac.jp})/
Mathematical Institute, Tohoku University, Sendai 980-8578, Japan/
RIKEN Center for Advanced Intelligence Project (AIP), Tokyo 103-0027, Japan}
%%%%%%%%%%%%%%%%%%%%%%%%%%%%%%%%%%%%%%%%
\maketitle
%%%%%%%%%%%%%%%%%%%%%%%%%%%%%%%%%%%%%%%%
\begin{abstract}
Due to \v{C}encov's theorem, there exists a unique family of invariant symmetric $(0,2)$-tensor fields on the space of positive probability measures on a set of $n$-points indexed by $n\in \mathbb{N}$ under Markov embeddings.
%%%%%
We  deform  Markov embeddings keeping sufficiency,  and prove existence and uniqueness of  invariant families under the embeddings.
\end{abstract}
%%%%%%%%%%%%%%%%%%%%%%%%%%%%%%%%%%%%%%%%

%%%%%%%%%%%%%%%%%%%%%%%%%%%%%%%%%%%%%%%%
\section{Introduction}
In the present paper, we investigate geometry  on  finite sample spaces.
For $n\in \mathbb{N}$,  set
\[
\Omega_{n+1}:=\{1, \cdots, n+1\},\quad
\mathcal{P}_{n}:=\left\{ p:\Omega_{n+1} \to (0,1) \ \bigg|\ \sum_{i\in \Omega_{n+1}} p(i)=1\right\}.
\]
%%%%%
The set $\mathcal{P}_{n}$ is regarded as a submanifold of $\mathbb{R}^{n+1}$ 
via an embedding $\iota_n: \mathcal{P}_{n} \to \mathbb{R}^{n+1}$ defined by $\iota_n(p)=(p(i))_{i=1}^{n+1}$.
%%%%%
From the viewpoint of statistics,
a Riemannian metric $g_n$ on $\mathcal{P}_n$ is required to be  invariant under the change of reference measures on the sample space $\Omega_{n+1}$.   
This invariance is extended to the invariance of a family $\{g_n\}_{n\in \mathbb{N}}$ of Riemannian metrics  under Markov embeddings.
%%%%%%%%%%%%%%%%%%%%%%%%%%%%%%%%%%%%%%%%
\begin{definition}
\label{ME}
Let  $n, N\in \mathbb{N}$ with $n\leq N$.
%%%
\begin{enumerate}
\setlength{\leftskip}{-10pt}
\item
A family $\{Q_i\}_{i\in \Omega_{n+1}}$ of probability measures on $\Omega_{N+1}$ is a \emph{Markov partition} if 
\[
\bigsqcup_{i\in \Omega_{n+1}} \supp(Q_i)=\Omega_{N+1}
\]
holds, where $\supp(Q_i)$ stands for the  support of $Q_i$. 
%%%
\item
A map $F^N_n:\mathcal{P}_{n} \to \mathcal{P}_N$ is a \emph{Markov embedding}
if there exists a Markov partition $\{Q_i\}_{i\in \Omega_{n+1}}$ 
such that
\[
F^N_n(p):=\sum_{i\in \Omega_{n+1}} p(i) Q_i \quad \text{for any $p\in \mathcal{P}_{n}$.}
\]
\end{enumerate}
\end{definition}
%%%%%%%%%%%%%%%%%%%%%%%%%%%%%%%%%%%%%%%%
Markov embeddings are closely related to  \emph{sufficient statistics}. 
%%%%%%
\begin{proposition}{\rm(Fisher--Neyman characterization,\,\cite{AJLS-book}*{Theorem 5.3})}\label{ss}
Let $n,m\in \mathbb{N}$.
Fix a submanifold $\mathcal{P}$ of $\mathcal{P}_m$ smoothly parameterized by an open set  $M$ in a Euclidean space.
The diffeomorphism is denoted by  $p_{\ast}:M\to \mathcal{P}$. 
A  map  $\kappa : \Omega_{m+1} \to \Omega_{n+1}$  is {\em a Fisher--Neyman sufficient statistic} for~$\mathcal{P}$ if and only if  there exist 
$s:  \Omega_{n+1} \times M \to \mathbb{R}$ and  $t:\Omega_{m+1} \to \mathbb{R}$ such that 
\[
p_\xi (\omega)=s(\kappa (\omega),\xi)\cdot  t(\omega) \quad
\text{for any  $\xi\in M$ and $\omega \in \Omega_{m+1}$.}
\] 
\end{proposition}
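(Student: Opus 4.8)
The plan is to treat the statement as the classical Fisher--Neyman factorization criterion in the finite, strictly positive setting: unwind the definition of sufficiency and carry out the elementary bookkeeping, the only care needed being for the degeneracies that positivity of the measures in $\mathcal{P}_m$ rules out.

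First I would fix notation and recall the definition. Write $p_\xi:=p_\ast(\xi)$ and let $\kappa_\ast p_\xi$ denote the push-forward of $p_\xi$ to $\Omega_{n+1}$, so that $(\kappa_\ast p_\xi)(i)=\sum_{\omega\in\kappa^{-1}(i)}p_\xi(\omega)$. The statistic $\kappa$ is Fisher--Neyman sufficient for $\mathcal{P}$ precisely when, for every $i$ in the image of $\kappa$, the conditional probability $\omega\mapsto p_\xi(\omega)/(\kappa_\ast p_\xi)(i)$ on the fibre $\kappa^{-1}(i)$ does not depend on $\xi\in M$; if the definition adopted from \cite{AJLS-book} is instead phrased via a Markov kernel $K$ from $\Omega_{n+1}$ to $\Omega_{m+1}$ recovering the family as $p_\xi=K_\ast(\kappa_\ast p_\xi)$, I would first record the routine equivalence, taking $K(i,\cdot)$ to be the common conditional distribution. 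Because every element of $\mathcal{P}_m$ is strictly positive, $(\kappa_\ast p_\xi)(i)>0$ for all $i$ in the image of $\kappa$ and all $\xi$, so every denominator appearing below is legitimate; co-restricting $\kappa$ to its image, I may moreover assume $\kappa$ surjective.

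For the direction ``sufficient $\Rightarrow$ factorization'' I would set $s(i,\xi):=(\kappa_\ast p_\xi)(i)$ and $t(\omega):=p_\xi(\omega)/(\kappa_\ast p_\xi)(\kappa(\omega))$, where the right-hand side of the latter is a bona fide function of $\omega$ alone exactly because sufficiency says it is $\xi$-independent. Then $p_\xi(\omega)=s(\kappa(\omega),\xi)\,t(\omega)$ is immediate, and $s(i,\cdot)$ is even smooth, being a finite sum of the smooth functions $\xi\mapsto p_\xi(\omega)$ (smooth since $p_\ast$ is a diffeomorphism onto $\mathcal{P}\subset\mathcal{P}_m\subset\mathbb{R}^{m+1}$ and $p_\xi(\omega)$ is a coordinate of $\iota_m(p_\xi)$). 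For the converse, given a factorization I would sum it over $\omega\in\kappa^{-1}(i)$ to obtain $(\kappa_\ast p_\xi)(i)=s(i,\xi)\,T(i)$ with $T(i):=\sum_{\omega\in\kappa^{-1}(i)}t(\omega)$; since the left-hand side is positive we get $T(i)\neq 0$, hence for $\omega\in\kappa^{-1}(i)$ the conditional probability equals $t(\omega)/T(i)$, which is visibly independent of $\xi$ --- that is, $\kappa$ is sufficient.

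I do not expect a serious obstacle: the argument is finite-dimensional bookkeeping, and the two places needing attention are both disarmed by the hypotheses. The first is the well-definedness of the ``constant'' $t$ in the forward direction, which is literally the defining property of sufficiency; the second is the degenerate behaviour at points $i\notin\kappa(\Omega_{m+1})$ or at fibres meeting a null set, which do not occur here once $\kappa$ is co-restricted to its image and one uses that all $p_\xi$ are strictly positive. If in addition one wants the factorization with $s$ and $t$ both strictly positive, or $s$ smooth in $\xi$, the constructions above already provide it.
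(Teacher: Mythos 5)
The paper does not prove this proposition at all: it is quoted verbatim from \cite{AJLS-book}*{Theorem 5.3} and used as a black box (only the \emph{application} of the criterion, via explicit choices of $s$ and $t$, appears in Remark~\ref{mke}). So there is no in-paper argument to compare yours against; what can be said is that your proof is the standard finite-sample-space factorization argument and it is correct. Both directions are sound: in the forward direction $t(\omega):=p_\xi(\omega)/(\kappa_\ast p_\xi)(\kappa(\omega))$ is well defined precisely by the $\xi$-independence of the conditional distributions, and in the converse direction the identity $(\kappa_\ast p_\xi)(i)=s(i,\xi)\,T(i)>0$ forces $s(i,\xi)\neq 0$ and $T(i)\neq 0$, so the conditional probability $t(\omega)/T(i)$ is legitimately $\xi$-free; strict positivity of every $p\in\mathcal{P}_m$ disposes of all null-set issues, exactly as you note. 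The one point you rightly flag but leave conditional is the definition of ``Fisher--Neyman sufficient statistic'' actually adopted in \cite{AJLS-book} (it is phrased there via conditional expectations/Markov kernels rather than directly via fibrewise conditional distributions); to make your write-up self-contained you would need to spell out that routine equivalence rather than merely promise it, but this is an expository gap, not a mathematical one.
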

%%%%%%%%
See Remark~\ref{mke}~(1) for the relation between Markov embeddings and Fisher--Neyman sufficient statistics,
and \cite{AJLS-book}*{Section 5.1} for a detailed treatment of sufficient statistics.
%%

%%%%%
\v{C}encov~\cite{Ce} showed that there exists a unique family of invariant Riemannian metrics  on $\mathcal{P}_n$ indexed by $n\in \mathbb{N}$ under Markov embeddings. 
This invariant Riemannian metric is called the \emph{Fisher metric}.
%%%%%
For each $i\in \Omega_{n+1}$,  define the function $\ell^i:\mathbb{R}^{n+1} \to \mathbb{R}$  by 
\[
\ell^i((x^j)_{j=1}^{n+1}):=\log |x^i|.
\]
%%%%%%%%%%%%%%%%%%%%%%%%%%%%%%%%%%%%%%%%
\begin{definition}
The {\it Fisher metric} $g_n^F$ on $\mathcal{P}_{n}$ is a Riemannian metric  given by
\begin{align*}
g_n^F(X_p, Y_p)
&:=\sum_{i\in \Omega_{n+1}} p(i) X_p (\ell^i \circ \iota_n)  Y_p (\ell^i \circ \iota_n) \quad
\text{for any $p\in \mathcal{P}_{n}$ and $X,Y\in  \mathfrak{X}(\mathcal{P}_{n})$.}
\end{align*}
\end{definition}
%%%%%%%%%%%%%%%%%%%%%%%%%%%%%%%%%%%%%%%%%%%%%%%%
%%%%%%%%%%%%%%%%%%%%%%%%%%%%%%%%%%%%%%%%%%%%%%%%
\begin{theorem}{\rm (\cite{Ce}*{Theorem 11.1}, \cite{AJLS-book}*{Theorem 2.1})}\label{cencov}
For $n\in \mathbb{N}$, let $A_n$ be a continuous symmetric $(0,2)$-tensor field on $\mathcal{P}_{n}$.
Assume that for any $n, N\in \mathbb{N}$ with $n\leq N$, and   each Markov embedding $F^N_n:\mathcal{P}_{n} \to \mathcal{P}_N$,  
the pullback of $A_N$ by  $F^N_n$ is always $A_n$.
%%%%%
Then there exists $\lambda\in \mathbb{R}$ such that  $A_n=\lambda g_n^F$ for any $n\in \mathbb{N}$.
\end{theorem}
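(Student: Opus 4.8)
The plan is to prove \v{C}encov's theorem by exploiting the flexibility of Markov embeddings to pin down the tensor $A_n$ at every point of $\mathcal{P}_n$, reducing everything to the combinatorics of a few distinguished embeddings. First I would record how the pullback of a tensor under a Markov embedding $F^N_n$ associated with a Markov partition $\{Q_i\}$ acts: since $F^N_n$ is affine in barycentric coordinates, its differential is constant and maps $\partial/\partial p(i)$ to $Q_i - Q_{n+1}$ (working in the obvious chart where $p(n+1)=1-\sum_{i\le n}p(i)$). Thus the invariance hypothesis becomes a system of linear identities relating the matrix entries of $A_N$ at $F^N_n(p)$ to those of $A_n$ at $p$.

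The key maneuver is to use two families of Markov embeddings. The first is \emph{refinement of a single coordinate}: fix $n$ and $j\in\Omega_{n+1}$, and split the $j$-th point into two points with weights $\alpha$ and $1-\alpha$ for a parameter $\alpha\in(0,1)$; this gives a Markov embedding $\mathcal{P}_n\to\mathcal{P}_{n+1}$. The second is \emph{permutation embeddings}: for a bijection $\sigma$ of $\Omega_{n+1}$, relabeling points is a Markov embedding $\mathcal{P}_n\to\mathcal{P}_n$, so $A_n$ must be invariant under the induced $S_{n+1}$-action. Symmetry under $S_{n+1}$ forces $A_n$, at the uniform (barycenter) point $p_0$, to be a specific invariant quadratic form on the tangent space — essentially a single scalar multiple of the Fisher form, since the space of $S_{n+1}$-invariant symmetric bilinear forms on the standard simplex's tangent space is one-dimensional. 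Then I would run the splitting embeddings to transport this identification: the $\alpha$-split embedding moves the barycenter of $\mathcal{P}_n$ to a non-uniform point of $\mathcal{P}_{n+1}$, and letting $n$, $j$, and $\alpha$ vary, together with permutations, one reaches a dense set of points in every $\mathcal{P}_n$; continuity of $A_n$ then gives $A_n=\lambda_n g_n^F$ on all of $\mathcal{P}_n$ for some constant $\lambda_n$. A final compatibility check — pulling back along any one nontrivial Markov embedding $F^N_n$ and comparing — shows $\lambda_n=\lambda_N$, so the constant is global; call it $\lambda$.

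Concretely, the order of steps is: (i) set up the affine chart and compute $dF^N_n$; (ii) establish the permutation-invariance of each $A_n$ and solve the resulting linear-algebra problem to see that an $S_{n+1}$-invariant symmetric $(0,2)$-form at the barycenter is proportional to $g_n^F$ there; (iii) use coordinate-splitting embeddings to propagate proportionality from the barycenter to all points with rational barycentric coordinates, invoking continuity for the rest; (iv) tie the constants across dimensions using one fixed embedding. The bookkeeping in (i) and the linear algebra in (ii) are routine; the genuine content is (iii), showing the orbit of the barycenter under splittings and permutations is dense and that proportionality is preserved along each such embedding — this is where one must carefully track how the Fisher form itself pulls back (it does so correctly by construction, since $\ell^i\circ\iota$ transforms compatibly under Markov partitions) and subtract it off, so that the \emph{difference} $A_n-\lambda_n g_n^F$ is an invariant tensor vanishing at one point and hence, by the same propagation, everywhere.

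I expect the main obstacle to be making the density/propagation argument in step (iii) fully rigorous: one needs that for any $p\in\mathcal{P}_n$ with rational coordinates there is a chain of splitting and permutation embeddings carrying the barycenter of some $\mathcal{P}_m$ onto $F^{\,\cdot}_{\,\cdot}(p_0^{(m)})$ close to (or equal to) $p$, and that invariance of the tensor \emph{difference} is inherited along the whole chain without accumulating error — this requires that each elementary embedding be genuinely a Markov embedding with a bona fide Markov partition (disjoint supports), which constrains the allowable splittings and forces some care when a coordinate is to be divided into several pieces simultaneously.
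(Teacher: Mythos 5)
First, a caveat: the paper does not prove Theorem~\ref{cencov} at all --- it is quoted from \cite{Ce} and \cite{AJLS-book} --- so the comparison below is with the standard argument of those references, which your outline essentially reproduces: permutation invariance plus irreducibility of the standard representation of $\mathfrak{S}_{n+1}$ to pin down $A_n$ at the barycenter $b_n$ up to a scalar, splitting embeddings to reach rational points, continuity for the rest, and a final matching of the constants across dimensions. Steps (i), (ii) and (iv) are fine as sketched.

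The genuine gap is the direction of propagation in step (iii). You push the barycenter \emph{forward}: ``the $\alpha$-split embedding moves the barycenter of $\mathcal{P}_n$ to a non-uniform point of $\mathcal{P}_{n+1}$,'' and conclude that $A_{n+1}$ (equivalently, the invariant difference $A_{n+1}-\lambda g^F_{n+1}$) is determined at the image point. But the invariance hypothesis $(F^{n+1}_n)^{*}A_{n+1}=A_n$, evaluated at $b_n$, only constrains $A_{n+1}$ at $F^{n+1}_n(b_n)$ \emph{restricted to the $n$-dimensional subspace} $dF^{n+1}_n\left(T_{b_n}\mathcal{P}_n\right)$ of the $(n+1)$-dimensional space $T_{F^{n+1}_n(b_n)}\mathcal{P}_{n+1}$; it says nothing about the remaining directions, so ``the difference vanishes at one point, hence at all its images'' does not follow. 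The correct direction is the reverse one: given $p\in\mathcal{P}_n$ with $p(i)=m_i/N$, take the Markov embedding $F^{N-1}_n$ whose partition splits the $i$-th point into $m_i$ \emph{equal} pieces; then $F^{N-1}_n(p)=b_{N-1}$, and pulling $A_{N-1}=\lambda_{N-1}\,g^F_{N-1}$ (known at the barycenter by your step (ii)) \emph{back} along $F^{N-1}_n$ determines $A_n$ at $p$ on all of $T_p\mathcal{P}_n$ and shows it equals $\lambda_{N-1}\,g^F_n$ there, since $g^F$ pulls back correctly. Writing the same rational point with two different denominators then forces all the constants $\lambda_{N}$ to coincide, and continuity finishes. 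With this reversal your outline becomes the standard proof; without it, step (iii) does not close.
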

%%%%%%%%%%%%%%%%%%%%%%%%%%%%%%%%%%%%%%%%

%%%%%%%%%%%%%%%%%%%%%%%%%%%%%%%%%%%%%%%%
\v{C}encov's theorem has been extended  in various directions.
For example, this was extended to the cone over $\mathcal{P}_n$ (Remark \ref{Ca}, see also \cite{Ca})
and  to continuous sample spaces (see~\cite{AJLS, AJLS-book} and  the references therein).
However, as far as the authors know,
there is no attempt to deform Markov embeddings and to generalize \v{C}encov's theorem.
%%%

In this paper, we first  introduce two embeddings from $\mathcal{P}_n$ to $\mathcal{P}_{n+1}$ related to sufficient statistics
(Definition \ref{patch}).
We construct  a family of invariant symmetric $(0,2)$-tensor fields  on $\mathcal{P}_n$ under one kind of the  embeddings, 
where invariant families are not uniquely determined  (Proposition \ref{inv}).
%%%
We formulate a conformal condition  and  a parallel condition  to classify these invariant families
and  prove a uniqueness result (Theorem \ref{diag}).
%%%%%%%%%%%%%%%%%%%%%%%%%%%%%%%%%%%%%%%%

%%%%%%%%%%%%%%%%%%%%%%%%%%%%%%%%%%%%%%%%
\section{Deformed  Markov  embeddings}
%%%%%%%%%%%%%%%%%%%%%%%%%%%%%%%%%%%%%%%%%
We  deform  Markov embeddings so that the supports of the corresponding probability measures are overlapped while keeping sufficiency.
Let  $\mathfrak{S}_{n}$ be the symmetric group of degree~$n$.
%%%%%%%%%%%%%%%%%%%%%%%%%%%%%%%%%%%%%%%%%%%
\begin{definition}\label{patch}
Let $n\in \mathbb{N}$.
%%%%%
\begin{enumerate}
\setlength{\leftskip}{-10pt}
%%%
\item
A family $\{Q_i\}_{i\in \Omega_{n+1}}$ of probability measures on $\Omega_{n+2}$ is a \emph{Markov patch} 
if  there exist $\sigma \in \mathfrak{S}_{n+2}$ and $\{a_i\}_{i\in \Omega_{n+1}}$ such that $a_i\in (0,1)$ and
\[
Q_i:=a_i \delta_{\sigma (i)}+(1-a_i)\delta_{\sigma(n+2)}
\quad
\text{for any $i\in \Omega_{n+1}$,}
\]
where $\delta_{\ast}$ stands for Dirac's delta measure on $\Omega_{n+2}$.
%%%%
\item
A map $G_n:\mathcal{P}_{n} \to \mathcal{P}_{n+1}$ is a \emph{patched Markov  embedding}
if there exists a Markov patch $\{Q_i\}_{i\in \Omega_{n+1}}$ 
such that
\[
G_n(p):=\sum_{i\in \Omega_{n+1}} p(i) Q_i
\quad
\text{for any $p\in \mathcal{P}_{n}$.}
\]
\end{enumerate}
In the case of  $a_i\equiv \alpha$,  we call  $\{Q_i\}_{i\in \Omega_{n+1}}$ and  $G_n:\mathcal{P}_{n} \to \mathcal{P}_{n+1}$ 
a \emph{Markov scalar  patch} and a \emph{scalar patched Markov  embedding}, respectively.
\end{definition}
%%%%%%%%%%%%%%%%%%%%%%%%%%%%%%%%%%%%%%%%%

%%%%%%%%%%%%%%%%%%%%%%%%%%%%%%%%%%
%%%%%%%%%%%%%%%%%%%%%%
\begin{remark}\label{mke}
We confirm  how  Markov embeddings  and patched Markov embeddings relate to  sufficient statistics.
\begin{enumerate}
\setlength{\leftskip}{-10pt}
\item
Fix a Markov embedding  $F_n^N: \mathcal{P}_{n} \to \mathcal{P}_N$ determined by a Markov partition $\{Q_i\}_{i\in \Omega_{n+1}}$.
%%%
Let $M \subset \mathbb{R}^{n}$ be  an open set being diffeomorphic to $\mathcal{P}_{n}$,
where the diffeomorphism is denoted by  $\rho_{\ast}:M\to \mathcal{P}_{n}$. 
Set $\mathcal{P}:=F^N_n(\mathcal{P}_{n})$  and 
$p:= F_n^N \circ \rho: M\to \mathcal{P}$.
%%%%%
Then a  map $\kappa :\Omega_{N+1} \to \Omega_{n+1}$  given by 
\[
\kappa(I):=i \quad \text{if $I\in \supp (Q_i)$}
\] 
is a Fisher--Neyman sufficient  statistic for $\mathcal{P}$, where we choose
  \[
s(i,\xi):=\rho_\xi(i), \quad t(I):=\sum_{i\in \Omega_{n+1} }Q_{i}(I). 
 \] 
%%%t%
%%%%f
\item
Fix a  patched Markov embedding  $G_n: \mathcal{P}_{n} \to \mathcal{P}_{n+1}$ determined by a Markov patch $\{Q_i\}_{i\in \Omega_{n+1}}$ of the form
\[
\quad\qquad Q_i=a_i \delta_{\sigma (i)}+(1-a_i)\delta_{\sigma(n+2)}\quad \text{for some $\sigma \in \mathfrak{S}_{n+2}$ and $\{a_i\}_{i\in \Omega_{n+1}}$ such that $a_i \in (0,1)$.}
\]
%%%
Fix $j\in \Omega_{n+1}, b\in (0,1)$  and set 
\[
a_{\min}:= \min\left\{a_i \, | \, i\in \Omega_{n+1}, i\neq j\right\}, \quad
a_{\max}:=\max\left\{a_i \, | \, i\in \Omega_{n+1}, i\neq j\right\}. 
\]
We take 
\[
c \in  \left( b a_j + (1-b)a_{\min}, \  b a_j + (1-b) a_{\max} \right) 
\]
if $a_{\min}<a_{\max}$, and otherwise  $c := ba_j+(1-b)a_{\min}$.
Then the subset of $\mathcal{P}_{n}$ defined by 
\[
\mathcal{P}':=\left\{p\in \mathcal{P}_{n} \ \Big|\ p(j)=b, \ \sum_{i\in \Omega_{n+1}} a_i p(i)=c \ \right\}
\]
is diffeomorphic to an  open set  $M \subset \mathbb{R}^{d}$, where
\[
d:=\begin{cases}
n-1 & \text{if  $a_{\min}=a_{\max}$},\\
n-2 & \text{otherwise.}
\end{cases}
\]
We denote by $\rho_{\ast}:M\to \mathcal{P}'$ the diffeomorphism.
%%%%%
We define  a map $\kappa:\Omega_{n+2} \to \Omega_{n+1}$  by 
\[
\kappa(I):=
\begin{cases}
i  & \text{if $I\in \supp(Q_i) \cap \sigma(\Omega_{n+1})$},  \\
j  & \text{if $I=\sigma(n+2)$}.
\end{cases}
\]
 Then  $\kappa:\Omega_{n+2} \to \Omega_{n+1}$
 is a Fisher--Neyman sufficient  statistic for $\mathcal{P}:=G_n(\mathcal{P}')$
 with a parametrization $p: = G_n \circ\rho:M \to \mathcal{P}$, where we choose
\[
\qquad
s(i, \xi):=\rho_\xi(i),
\quad
t(I):=\sum_{i\in \Omega_{n+1}} Q_i(I) \cdot \left( \sum_{i\in \Omega_{n+1}}  \delta_{\sigma(i)}(I) +\frac{1-c}{b \cdot \displaystyle\sum_{i\in \Omega_{n+1}} (1-a_i)}\delta_{\sigma(n+2)}(I) \right) .
\]
%%%%
\end{enumerate}
\end{remark}

Throughout this paper,  for $n\in \mathbb{N}$ and $i\in \Omega_{n+1}$, 
we will denote by $Z_i^n $ the vector field on $\mathcal{P}_n$ determined by
\[
d\iota_n(Z_i^n )=\frac{\partial}{\partial x^i}-\frac{1}{n+1} \sum_{j \in \Omega_{n+1}}\frac{\partial}{\partial x^j}.
\]
For  $X, Y\in \mathfrak{X}(\mathcal{P}_{n})$, we will write
\[
d\iota_n(X)=\sum_{i\in \Omega_{n+1}}X^i\frac{\partial}{\partial x^i}, \quad
d\iota_n(Y)=\sum_{i\in \Omega_{n+1}}Y^i\frac{\partial}{\partial x^i}.
\]
Define  the function $h: \mathbb{R}^{n+1} \to \mathbb{R}$ by 
\[
h\left((x^i)_{i=1}^{n+1}\right) := \sum_{i=1}^{n+1}|x^i|.
\]
We see that  $h\circ \iota_n  \equiv 1$ on $\mathcal{P}_n$, which implies 
\[
\sum_{i\in\Omega_{n+1}} X^i=d\iota_n(X)(h)  = X(h\circ\iota_n) = 0 
\ \quad \text{and} \quad
\sum_{i\in\Omega_{n+1}} Y^i\equiv 0,
\]
consequently
\begin{equation}\label{zero}
\ \  X=\sum_{i\in \Omega_{n+1}} X^i Z_i^{n}=\sum_{i\in \Omega_{n}} X^i (Z_i^{n}-Z_{n+1}^n),
\qquad
Y=\sum_{i\in \Omega_{n+1}} Y^i Z_i^{n}=\sum_{i\in \Omega_{n}} Y^i (Z_i^{n}-Z_{n+1}^n).
\end{equation}
%%%%%%%%%%%%%%%%%%%%%%%%%%%%%%%%%%%%%%%%%%%%%%%
In this expression, the Fisher metric $g_n^F$ at each point corresponds to a diagonal matrix.
%%%%%%
We construct a family of invariant $(0,2)$-tensor fields on $\mathcal{P}_{n}$
under scalar patched Markov embeddings, where the corresponding matrix at each point 
is the linear combination of a diagonal matrix and the outer product of a vector with itself.
%%%%%%%%%%%%%%%%%%%%%%%%%%%%%%%%%%
\begin{definition}
Define a symmetric positive definite $(0,2)$-tensor field $A_n^d $ and 
a symmetric positive semidefinite $(0,2)$-tensor field $A_n^s$ on $\mathcal{P}_{n}$ by 
\begin{align*}
A_n^{d}(X_p, Y_p)
&:=\sum_{i\in \Omega_{n+1}} X_p(\ell^i\circ \iota_n)Y_p(\ell^i\circ \iota_n)
=\sum_{i\in \Omega_{n+1}} \frac{X^i_pY^i_p}{p(i)^2},\\
%%%
A_n^{s}(X_p, Y_p)
&= \sum_{i,j\in \Omega_{n+1}} X_p(\ell^i \circ \iota_n) Y_p(\ell^j \circ \iota_n)
=\sum_{i,j\in \Omega_{n+1}}\frac{X^i_p}{p(i)}\frac{Y^j_p}{p(j)},
\end{align*}
for any $p\in \mathcal{P}_{n}$ and $X, Y\in  \mathfrak{X}(\mathcal{P}_{n})$, respectively.
For $\lambda,\mu\in \mathbb{R}$, set $A_n^{\lambda,\mu}:=\lambda A^{d}_n +\mu A^{s}_n$.
\end{definition}
%%%%
%
\begin{proposition}\label{inv}
The pullback of $A_{n+1}^{\lambda,\mu}$ by each scalar patched Markov embedding from $\mathcal{P}_n$ to $\mathcal{P}_{n+1}$  is $A_n^{\lambda,\mu}$.
%%%%
Conversely, if the pullback of $A_{n+1}^{\lambda, \mu}$ by  a patched Markov  embedding 
$G_n$ from $\mathcal{P}_{n}$ to $\mathcal{P}_{n+1}$ is  $A_n^{\lambda, \mu}$, 
then either $\lambda, \mu=0$ or $G_n$ is a scalar patched Markov  embedding.
\end{proposition}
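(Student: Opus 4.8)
The plan is to compute the pullback of $A^{\lambda,\mu}_{n+1}$ under an \emph{arbitrary} patched Markov embedding $G_n\colon\mathcal{P}_n\to\mathcal{P}_{n+1}$ and read off both assertions from one formula. Fix a Markov patch $Q_i=a_i\delta_{\sigma(i)}+(1-a_i)\delta_{\sigma(n+2)}$ defining $G_n$ and write $q=G_n(p)$. One first checks that $q(\sigma(i))=a_ip(i)$ for $i\in\Omega_{n+1}$ and $q(\sigma(n+2))=S(p):=1-\sum_{k\in\Omega_{n+1}}a_kp(k)\in(0,1)$, so that
\[
\ell^{\sigma(i)}\circ\iota_{n+1}\circ G_n=\log a_i+\ell^i\circ\iota_n ,\qquad
\ell^{\sigma(n+2)}\circ\iota_{n+1}\circ G_n=\log S .
\]
Introduce the two $1$-forms on $\mathcal{P}_n$
\[
\Theta_p(X):=\sum_{i\in\Omega_{n+1}}X_p(\ell^i\circ\iota_n)=\sum_{i\in\Omega_{n+1}}\frac{X^i}{p(i)} ,\qquad
\Phi_p(X):=-X_p(\log S)=\frac{1}{S(p)}\sum_{i\in\Omega_{n+1}}a_iX^i ,
\]
so that $A^s_n=\Theta\otimes\Theta$. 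Applying $dG_n$ to $X,Y\in\mathfrak{X}(\mathcal{P}_n)$ and using that the $\log a_i$ are constants, a short computation gives $G_n^{*}A^d_{n+1}=A^d_n+\Phi\otimes\Phi$ and $G_n^{*}A^s_{n+1}=(\Theta-\Phi)\otimes(\Theta-\Phi)$. When the $a_i$ all coincide one has $\Phi\equiv 0$, because $\sum_{i}X^i\equiv 0$ by \eqref{zero}; these formulas then reduce to $G_n^{*}A^d_{n+1}=A^d_n$ and $G_n^{*}A^s_{n+1}=A^s_n$, which proves the first assertion.

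For the converse I would subtract the two formulas above to get
\[
G_n^{*}A^{\lambda,\mu}_{n+1}-A^{\lambda,\mu}_n=(\lambda+\mu)\,\Phi\otimes\Phi-\mu\,(\Theta\otimes\Phi+\Phi\otimes\Theta) ,
\]
so the hypothesis says this symmetric $(0,2)$-tensor field vanishes identically; setting the two arguments equal yields the pointwise scalar identity $\Phi_p(X)\bigl((\lambda+\mu)\Phi_p(X)-2\mu\Theta_p(X)\bigr)=0$ for every $X$ and every $p$. At a point where $\Phi_p\neq 0$, the tangent vectors on which $\Phi_p$ does not vanish form the complement of a hyperplane, hence a dense set, so there $(\lambda+\mu)\Phi_p=2\mu\Theta_p$, and by continuity this is an identity of covectors at that point. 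Now assume $G_n$ is \emph{not} a scalar patched Markov embedding, i.e.\ the $a_i$ are not all equal. Then $\Phi_p\neq 0$ for \emph{every} $p$, since the covector $X\mapsto\sum_i a_iX^i$ restricted to the (fixed) tangent hyperplane $\{\sum_i X^i=0\}$ is nonzero exactly when $(a_i)_i$ is not a scalar multiple of $(1,\dots,1)$. Hence $(\lambda+\mu)\Phi_p=2\mu\Theta_p$ holds for all $p\in\mathcal{P}_n$.

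Finally I would pick $i_0\neq j_0$ with $a_{i_0}\neq a_{j_0}$ and evaluate this covector identity on the vector field $X$ with $X^{i_0}=1$, $X^{j_0}=-1$ and $X^k=0$ otherwise, which lies in $\mathfrak{X}(\mathcal{P}_n)$ by \eqref{zero}; this gives
\[
(\lambda+\mu)\,\frac{a_{i_0}-a_{j_0}}{S(p)}=2\mu\left(\frac{1}{p(i_0)}-\frac{1}{p(j_0)}\right)\qquad\text{for all }p\in\mathcal{P}_n .
\]
Freezing $p(k)$ at a small positive value for each $k\notin\{i_0,j_0\}$, writing $p(i_0)=x$ and $p(j_0)=c_0-x$ with $c_0:=1-\sum_{k\neq i_0,j_0}p(k)$, and letting $x\downarrow 0$, the left-hand side stays bounded because $S(p)\in(0,1)$, whereas the right-hand side behaves like $2\mu/x$; hence $\mu=0$, and then the left-hand side equals $\lambda(a_{i_0}-a_{j_0})/S(p)=0$, forcing $\lambda=0$. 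The bulk of the work, and essentially the only step demanding care, is the pullback computation together with the bookkeeping of the permutation $\sigma$ and the constant shifts $\log a_i$; once the two pullback formulas are in hand, the density argument producing the covector identity and the boundary blow-up that kills $\mu$ and then $\lambda$ are short and routine. I do not anticipate a genuine obstacle beyond keeping the rank-one pieces $\Phi\otimes\Phi$ and $\Theta\otimes\Phi$ cleanly separated.
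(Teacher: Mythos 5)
Your proof is correct, and it reaches the same decisive identity as the paper by a somewhat different organization. The paper treats the two directions as separate computations: for the scalar case it verifies $A_{n+1}^{\lambda,\mu}(dG_nX,dG_nY)=A_n^{\lambda,\mu}(X,Y)$ directly using $G_n(p)(\sigma(i))=\alpha p(i)$, and for the converse it evaluates the pullback identity on the single vector field $Z_i^n-Z_j^n$, arriving at exactly your scalar relation $(\lambda+\mu)(a_i-a_j)/S(p)=2\mu\bigl(1/p(i)-1/p(j)\bigr)$ and then asserting, without further detail, that it forces $\lambda+\mu=0$ and $\mu=0$. Your rank-one bookkeeping $G_n^{*}A^d_{n+1}=A^d_n+\Phi\otimes\Phi$ and $G_n^{*}A^s_{n+1}=(\Theta-\Phi)\otimes(\Theta-\Phi)$ computes the pullback once for an \emph{arbitrary} patched Markov embedding and reads off both assertions from a single formula; it also isolates precisely where the scalar hypothesis enters, namely $\Phi\equiv 0$ if and only if the $a_i$ are all equal, via $\sum_i X^i=0$. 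Two small remarks: the density-plus-continuity step used to pass from the vanishing of the quadratic form to the covector identity can be shortened to the observation that a product of two linear functionals vanishes identically only if one factor does; and your limit $p(i_0)\downarrow 0$, using that $S(p)\geq 1-\max_k a_k>0$ is bounded away from zero, supplies exactly the justification the paper leaves implicit in its final ``if and only if.'' I see no gaps.
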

\begin{proof}
Fix a  patched Markov embedding  $G_n: \mathcal{P}_{n} \to \mathcal{P}_{n+1}$ of the form
\[
G_n(p):=\sum_{i\in \Omega_{n+1}}p(i)\left(a_i \delta_{\sigma (i)}+(1-a_i)\delta_{\sigma(n+2)}\right)
\quad
\text{for any $p\in \mathcal{P}_{n}$}.
\]
If  $G_n$ is a scalar patched Markov embedding, namely $a_i\equiv \alpha$, then we have 
\[
dG_n (Z_i^n)
=\alpha \left(Z_{\sigma(i)}^{n+1}-\frac{1}{n+1}\sum_{j\in \Omega_{n+1}} Z_{\sigma(j)}^{n+1}\right),\qquad
dG_n (X)
=\alpha\sum_{i\in \Omega_{n+1}}X^i Z_{\sigma(i)}^{n+1},
\]
%%%
which implies 
\begin{align*}
A_n^{\lambda, \mu}(X_p,Y_p)
&=
\lambda\sum_{i\in \Omega_{n+1}} \frac{X^i_pY^i_p}{p(i)^2} 
+\mu\sum_{i,j \in \Omega_{n+1}}\frac{X^i_p}{p(i)}\frac{Y^j_p}{p(j)} \\ 
%%%%
&=
\lambda \sum_{i \in \Omega_{n+1}} \frac{\alpha X^{i}_p \alpha Y^{i}_p }{\{G_n(p)(\sigma(i))\}^2} 
+
\mu\sum_{i,j \in \Omega_{n+1}}\frac{\alpha X^{i}_p}{G_n(p)(\sigma(i))}\frac{\alpha Y^{j}_p}{G_n(p)(\sigma(j))} \\
%%%%%
&=
A_{n+1}^{\lambda, \mu}\left(dG_n(X_p), dG_n(Y_p)\right)
\end{align*}
for any $p\in \mathcal{P}_{n}$.
This proves the first claim.
%%%%%%%%%%%%%%%%%%%%%%%

%%%%
We next assume that the pullback of $A_{n+1}^{\lambda, \mu}$ by  $G_n$ is  $A_n^{\lambda,\mu}$. 
If  $G_n$ is not a scalar patched Markov  embedding,  then there exist $i,j \in\Omega_{n+1}$ such that $a_i\neq a_j$. 
%%%%
It turns out that 
\begin{align*}
&d G_n (Z_i^n-Z_j^n)=a_i(Z_{\sigma(i)}^{n+1}-Z_{\sigma(n+2)}^{n+1})-a_j(Z_{\sigma(j)}^{n+1}-Z_{\sigma(n+2)}^{n+1}),\\
%%%
&\lambda\left(\frac{1}{p(i)^2}+\frac{1}{p(j)^2}\right)+\mu\left(\frac{1}{p(i)}+\frac{-1}{p(j)}\right)^2 \\
&=A_n^{\lambda,\mu}\left((Z_i^n-Z_j^n)_p, (Z_i^n-Z_j^n)_p\right)
=A_{n+1}^{\lambda,\mu}\left(dG_n\left((Z_i^n-Z_j^n)_p\right),dG_n\left((Z_i^n-Z_j^n)_p\right) \right)\\
&=
\lambda\left[
\frac{a_i^2}{\{ G_n(p)(\sigma(i))\}^2}
+\frac{a_j^2}{ \left\{ G_n(p)(\sigma(j))\right\}^2}
+\frac{(-a_i+a_j)^2}{\left\{ G_n(p)(\sigma(n+2)) \right\}^2}\right] \\
%%%%%
&\quad+\mu\left[ \frac{a_i}{G_n(p)(\sigma(i))}+\frac{-a_j}{G_n(p)(\sigma(j))} +\frac{-a_i+a_j}{G_n(p)(\sigma(n+2))} \right]^2
\qquad\qquad
\text{for any $p\in \mathcal{P}_{n}$.}
\end{align*}
%%%%%
Thus we should have
\begin{align*}
\frac{(\lambda+\mu)(-a_i+a_j)}{ G_n(p)(n+2)}
+2\mu \left(\frac{1}{p(i)}-\frac{1}{p(j)}\right)=0 \quad \text{for any $p\in \mathcal{P}_{n}$},
\end{align*}
which  holds true  if and only if $\lambda+\mu=0$ and $\mu=0$, that is, $\lambda, \mu=0$.
This completes the proof of the second claim.
\end{proof}
%%%%%%%%%%%%%%%%%%%%%%%%%%%%
\begin{remark}\label{Ca}
We comment on the work of  Campbell \cite{Ca}.
Set 
\[
\mathbb{R}_+^{n+1}:=\{ x=(x^i)_{i=1}^{n+1} \in \mathbb{R}^{n+1} \ |\ x^i>0 \}.
\]
Since $\mathbb{R}_+^{n+1}$ can be regarded as the cone over $\mathcal{P}_n$, Markov embeddings are naturally extended to $\mathbb{R}_+^{n+1}$.
Let  $\widetilde{g}_{n+1}^F$ be  a $C^\infty$- Riemannian metric on $\mathbb{R}_+^{n+1}$.
Campbell  showed that a family  $\{\widetilde{g}_{n+1}^F\}_{n \in \mathbb{N}}$ of Riemannian metrics is  invariant under the extensions of Markov embeddings
if and only if there exist functions $\widetilde{\lambda}, \widetilde{\mu} \in C^\infty(\mathbb{R}^1_+)$ satisfying 
$\widetilde{\lambda}>0$ and $\widetilde{\lambda}+ \widetilde{\mu}>0$ such that 
\[
\widetilde{g}_{n+1}^F\left( \left(\frac{\partial}{\partial x^i}\right)_x, \left(\frac{\partial}{\partial x^j}\right)_x  \right)
=\widetilde{\lambda}\left(h(x) \right)\frac{h(x)}{x^i} \delta_{ij}+ \widetilde{\mu}\left(h(x)\right)
\quad
\text{for any $x\in \mathbb{R}^{n+1}_+$.}\
\]
In this case, the pullback of $\widetilde{g}_{n+1}^F$ by the embedding $\iota_n:\mathcal{P}_n \rightarrow \mathbb{R}_+^{n+1}$ 
coincides with
 $\widetilde{\lambda}(1)g_{n}^F$.
Let $j_n:\mathcal{P}_{n+1}  \to  \mathbb{R}_+^{n+1} $ be the projection  defined  by  $j_n(p):=(p(i))_{i=1}^{n+1}$.
Due to an similarity between $A_{n+1}^{\lambda,\mu}$ and $\widetilde{g}_{n+1}^F$, 
one might expect that the pullback metric of $\widetilde{g}_{n+1}^F$ by $j_n$  is $A_{n+1}^{\lambda,\mu}$.
However this is not true.
\end{remark}
%%%%%%%%%
%%%%%%%%%%%%%%%%
We state common properties of invariant families under scalar patched Markov  embeddings.
%%%%%%%
In what follows,  given $\sigma\in \mathfrak{S}_{n+2}$ and $\alpha \in (0,1)$,  
let $G_n^{\alpha, \sigma}: \mathcal{P}_{n} \to \mathcal{P}_{n+1}$ be  a scalar patched Markov  embedding defined by
\[
{G}_n^{\alpha,\sigma}(p):=\alpha \sum_{i\in \Omega_{n+1}}p(i)\delta_{\sigma(i)} +(1-\alpha) \delta_{\sigma(n+2)}
\quad
\text{for any $p\in \mathcal{P}_n$.}
\]
%%%%%
In particular, for the identity permutation $\mathrm{id}\in \mathfrak{S}_{n+2}$, we write $G_n^\alpha:=G_n^{\alpha,\mathrm{id}} $.
%%%%%%
%%%%
Denote by  $b_n $ the uniform probability measure on $\Omega_{n+1}$, that is, 
\[
b_n \equiv \frac1{n+1}\quad \text{on}\quad \Omega_{n+1}.
\]
We define $p_{u} \in \mathcal{P}_1$ for ${u} \in (0,1)$  by
\[
p_{u}(1)={u}, \qquad p_{u}(2)=1-{u}.
\]
Then $p_{1/2}=b_1$.
For  $Z\in \mathfrak{X}(\mathcal{P}_1)$,  $Z_{p_u}$ is abbreviated to $Z_u$.
%%%%%%%%%%%%%%%%%%%%%%%%%%%%%%%
\begin{proposition}\label{invariant}
Let  $A_n$ be  a symmetric $(0,2)$-tensor field  on $\mathcal{P}_{n}$.
Assume that for any $n\in \mathbb{N}$ and  each scalar patched Markov  embedding  $G_n:\mathcal{P}_n \to \mathcal{P}_{n+1}$, 
the pullback of $A_{n+1}$ by  $G_n$ is always $A_n$.
\begin{enumerate}
\item
For $Z\in \mathfrak{X}(\mathcal{P}_1)$ and $u\in(0,1)$,  then  $A_1(Z_u,Z_u)=A_1(Z_{1-u},Z_{1-u})$.
\item
For distinct $i, j \in \Omega_{n+1}$, 
the two quantities 
\[
\frac{A_n\left( (Z_i^{n})_{b_n}, (Z_i^{n})_{b_n}   \right)}{n(n+1)}, \quad
-\frac{ A_n\left( (Z_i^{n})_{b_n}, (Z_j^{n})_{b_n}   \right)}{n+1}
\]
coincide with each other.
The  quantity is independent of $n \in \mathbb{N}$ and $i,j \in \Omega_{n+1}$.
\end{enumerate}
\end{proposition}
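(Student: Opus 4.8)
The plan is to extract two symmetries hidden in the hypothesis --- conjugation of a scalar patched Markov embedding by a permutation of the enlarged sample space, and the linear relation $\sum_{i\in\Omega_{n+1}}Z_i^n=0$ --- and then to use one ``centring'' embedding to descend from $\mathcal{P}_{n+1}$ to $\mathcal{P}_n$. For part~(1), I would note that for $\alpha\in(0,1)$ and the transposition $\sigma=(1\,2)\in\mathfrak{S}_{3}$ one has $G_1^{\alpha,\sigma}=G_1^{\alpha}\circ s$, where $s\colon\mathcal{P}_1\to\mathcal{P}_1$ is the coordinate swap $s(p_u)=p_{1-u}$. Pulling $A_2$ back along both descriptions and invoking $(G_1^{\alpha,\sigma})^{*}A_2=A_1=(G_1^{\alpha})^{*}A_2$ gives $s^{*}A_1=s^{*}(G_1^{\alpha})^{*}A_2=A_1$. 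Since $ds$ carries $(Z_i^1)_{p_u}$ to $-(Z_i^1)_{p_{1-u}}$, evaluating $s^{*}A_1=A_1$ at $p_u$ on the pair $(Z_u,Z_u)$ yields $A_1(Z_u,Z_u)=A_1(Z_{1-u},Z_{1-u})$.

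For part~(2), the same device applied to a permutation $\sigma\in\mathfrak{S}_{n+2}$ fixing $n+2$ gives $G_n^{\alpha,\sigma}=G_n^{\alpha}\circ P_{\bar\sigma}$, where $P_{\bar\sigma}\colon\mathcal{P}_n\to\mathcal{P}_n$ is the coordinate permutation induced by $\bar\sigma:=\sigma|_{\Omega_{n+1}}$; hence $P_{\bar\sigma}^{*}A_n=A_n$ for every $\bar\sigma\in\mathfrak{S}_{n+1}$. Because $P_{\bar\sigma}$ fixes $b_n$ and $dP_{\bar\sigma}$ sends $(Z_i^n)_{b_n}$ to $(Z_{\bar\sigma(i)}^n)_{b_n}$, the numbers $\alpha_n:=A_n\big((Z_i^n)_{b_n},(Z_i^n)_{b_n}\big)$ and $\beta_n:=A_n\big((Z_i^n)_{b_n},(Z_j^n)_{b_n}\big)$ (for $i\neq j$) depend on $n$ alone. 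Pairing $\sum_{i\in\Omega_{n+1}}Z_i^n=0$ (immediate from the definition of $Z_i^n$, as already observed in \eqref{zero}) against $(Z_1^n)_{b_n}$ then gives $\alpha_n+n\beta_n=0$, which is precisely the asserted coincidence $\alpha_n/(n(n+1))=-\beta_n/(n+1)$.

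It remains to show this common value is independent of $n$, and this step --- making the right choice of embedding parameter --- is the one I expect to be the crux. I would take $\alpha=\tfrac{n+1}{n+2}$, so that $G:=G_n^{\alpha}$ sends $b_n$ to $b_{n+1}$, and record that $dG(Z_i^n)=\alpha\left(Z_i^{n+1}-\tfrac{1}{n+1}\sum_{j\in\Omega_{n+1}}Z_j^{n+1}\right)=\alpha\left(Z_i^{n+1}+\tfrac{1}{n+1}Z_{n+2}^{n+1}\right)$, the last equality using $\sum_{k\in\Omega_{n+2}}Z_k^{n+1}=0$. Expanding $\alpha_n=(G^{*}A_{n+1})\big((Z_i^n)_{b_n},(Z_i^n)_{b_n}\big)$ by bilinearity and substituting the relation $\beta_{n+1}=-\alpha_{n+1}/(n+1)$ obtained above collapses everything to $\alpha_n=\alpha^{2}\alpha_{n+1}\left(1-\tfrac{1}{(n+1)^2}\right)=\tfrac{n}{n+2}\,\alpha_{n+1}$, whence $\alpha_{n+1}/((n+1)(n+2))=\alpha_n/(n(n+1))$. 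The only thing to watch is the bookkeeping of the indices $Z^n_i$ versus $Z^{n+1}_i$ under $G$ and not dropping the contribution of the new coordinate $n+2$; everything else is elementary algebra.
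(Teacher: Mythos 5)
Your proposal is correct and follows essentially the same route as the paper's proof: the transposition $(1,2)$ for part (1), permutation-invariance at the barycenter plus the relation $\sum_{k}Z_k^n=0$ for the coincidence of the two quantities, and the choice $\alpha=(n+1)/(n+2)$ (so that $G_n^\alpha(b_n)=b_{n+1}$) for the independence of $n$. The only differences are cosmetic: you factor the permuted embeddings through an induced automorphism $P_{\bar\sigma}$ of $\mathcal{P}_n$ rather than comparing the two pullbacks directly, and you rewrite $-\tfrac{1}{n+1}\sum_{j\in\Omega_{n+1}}Z_j^{n+1}$ as $\tfrac{1}{n+1}Z_{n+2}^{n+1}$, which shortens the final expansion but yields the identical recursion $\alpha_n=\tfrac{n}{n+2}\,\alpha_{n+1}$.
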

\begin{proof}
For $Z\in \mathfrak{X}(\mathcal{P}_1)$ and $\alpha \in (0,1)$, a direct computation provides 
\begin{align*}
A_1(Z_u,Z_u)
&=A_2(dG_1^{\alpha, (1,2)}(Z_u), dG_1^{\alpha, (1,2)}(Z_u)) \\
&=A_2(dG_1^{\alpha}(Z_{1-u}), dG_1^{\alpha}(Z_{1-u}))
=A_1(Z_{1-u},Z_{1-u})),
\end{align*}
where $(1,2)\in \mathfrak{S}_2$ is the transposition.
This proves (1).
%%%%%%%%%%%%%%%%

%%%%%%%%%%%%%%%%
To prove (2), we apply a similar argument as in the proof of Theorem \ref{cencov}. 
For example, see \cite{AJLS-book}*{Theorem 2.1}.
Fix distinct  $i,j \in \Omega_{n+1}$ and $\alpha \in (0,1)$.
We find that 
\begin{align*}
A_n\left( (Z_i^n)_{b_n}, (Z_i^n)_{b_n}\right)
&=
A_{n+1}\left( dG_n^{\alpha, (i,j)}\left( (Z_i^n)_{b_n}\right), dG_n^{\alpha, (i,j)}\left( (Z_i^n)_{b_n}\right) \right)\\
&=
A_{n+1}\left( dG_n^{\alpha}\left( (Z_j^n)_{b_n}\right),dG_n^{\alpha}\left(  (Z_j^n)_{b_n} \right)\right)
=
A_n\left( (Z_j^n)_{b_n}, (Z_j^n)_{b_n}\right).
\end{align*}
Thus  $A_n( (Z_i^n)_{b_n}, (Z_i^n)_{b_n})$  is independent of the choice of $i \in \Omega_{n+1}$.
Similarly, we find that  $A_n( (Z_i^n)_{b_n}, (Z_j^n)_{b_n} )$  is independent of the choice of $i,j\in \Omega_{n+1}$.
Moreover, it turns out that 
\begin{align*}
0=A_n\left( (Z_i^n)_{b_n}, 0\right)
&=A_n\left((Z_i^n)_{b_n}, \sum_{k\in \Omega_{n+1}} \left(Z_k^{n}\right)_{b_n} \right)\\
&=
A_n\left( (Z_i^n)_{b_n}, (Z_i^n)_{b_n}\right)+n A_n\left( (Z_i^n)_{b_n}, (Z_j^n)_{b_n}\right).
\end{align*}
%%%%%%%
If we choose $\alpha=(n+1)/(n+2)$, then  $G_n^{\alpha}(b_n)=b_{n+1}$ and 
\begin{align*}
&A_n\left( (Z_i^n)_{b_n}, (Z_i^n)_{b_n} \right)\\
%%%%%%%%
&=
A_{n+1}\left( dG_n^{\alpha}\left( (Z_i^n)_{b_n}\right),dG_n^{\alpha}\left( (Z_i^n)_{b_n}\right) \right)\\
%%%%%%%%
&=
\alpha^2 
A_{n+1}\left(
(Z_i^{n+1})_{b_{n+1}}-\frac{1}{n+1} \sum_{k\in \Omega_{n+1}} (Z_k^{n+1})_{b_{n+1}},\,
(Z_i^{n+1})_{b_{n+1}}-\frac{1}{n+1} \sum_{k\in \Omega_{n+1}}  (Z_k^{n+1})_{b_{n+1}}
\right)\\
%%%%%
&=
\alpha^2
\left\{\frac{n}{n+1}
\cdot
A_{n+1}\left((Z_i^{n+1})_{b_{n+1}},\, (Z_i^{n+1})_{b_{n+1}}\right)
-\frac{n}{n+1}
\cdot
A_{n+1}\left((Z_i^{n+1})_{b_{n+1}},\, (Z_j^{n+1})_{b_{n+1}}\right)
\right\}\\
%%%%%%%
%%%%
&=
\frac{n+1}{n+2}\cdot \frac{n}{n+1}
\cdot
A_{n+1}\left((Z_i^{n+1})_{b_{n+1}},\,(Z_i^{n+1})_{b_{n+1}}\right).
\end{align*}
This completes the proof of (2).
\end{proof}
%%%%%%%%%%%%%%%%%%%%%%%%%%%
%%%%%%%%
\begin{proposition}\label{key}
For  $p\in \mathcal{P}_n$ and distinct $i,j\in \Omega_{n+1}$,
there is a composition $ H_p^{ij}:\mathcal{P}_1 \to \mathcal{P}_n$ of scalar patched Markov  embeddings   such that 
\begin{align*}
%%%%
H_p^{ij}(p_u)=p \quad \text{for\ }u:=\frac{p(i)}{p(i)+p(j)}, \qquad
Z_i^n-Z_j^n=\frac{2dH_p^{ij}(Z_1^1)}{p(i)+p(j)}.
%%%%%
\end{align*}
\end{proposition}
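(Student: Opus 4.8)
The plan is to build $H_p^{ij}$ as a composition
$H_p^{ij}:=G_{n-1}^{\alpha_{n-1},\sigma_{n-1}}\circ\cdots\circ G_1^{\alpha_1,\sigma_1}$
of $n-1$ scalar patched Markov embeddings $G_m^{\alpha_m,\sigma_m}:\mathcal{P}_m\to\mathcal{P}_{m+1}$ (the empty composition $\mathrm{id}_{\mathcal{P}_1}$ when $n=1$). The point is that $G_m^{\alpha_m,\sigma_m}$ rescales every mass of a measure by $\alpha_m$ and creates a single new atom carrying the remaining mass $1-\alpha_m$. Starting from $p_u$ on $\Omega_2$, whose two masses are in the ratio $u:(1-u)=p(i):p(j)$, I will introduce, one atom at a time, the $n-1$ elements of $\Omega_{n+1}$ different from $i$ and $j$, always keeping the two distinguished masses in the ratio $p(i):p(j)$; the permutations $\sigma_m$ serve only to route each new atom to its correct label in $\Omega_{n+1}$.

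Concretely, I enumerate $\Omega_{n+1}\setminus\{i,j\}=\{k_1,\dots,k_{n-1}\}$ and set, for $1\le m\le n$,
\[
\beta_m:=1-\sum_{l=m}^{n-1}p(k_l),\qquad \alpha_m:=\frac{\beta_m}{\beta_{m+1}}\quad(1\le m\le n-1).
\]
Then $\beta_n=1$, $\beta_1=p(i)+p(j)$, and $0<p(i)+p(j)\le\beta_m<\beta_{m+1}\le1$ for $1\le m\le n-1$, so each $\alpha_m\in(0,1)$ and $\alpha_1\cdots\alpha_{n-1}=\beta_1/\beta_n=p(i)+p(j)$. I take $\sigma_m=\mathrm{id}$ for $1\le m\le n-2$ and let $\sigma_{n-1}\in\mathfrak{S}_{n+1}$ be the bijection with $\sigma_{n-1}(1)=i$, $\sigma_{n-1}(2)=j$ and $\sigma_{n-1}(l)=k_{l-2}$ for $3\le l\le n+1$; these choices complete the description of $H_p^{ij}$ (with $G_m^{\alpha_m}:=G_m^{\alpha_m,\mathrm{id}}$ as in the text).

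For the first identity I follow the masses along the chain: by induction, $H_p^{ij}(p_u)$ assigns mass $(\alpha_1\cdots\alpha_{n-1})u=\beta_1u=p(i)$ to the atom $i$, mass $\beta_1(1-u)=p(j)$ to $j$, and mass $(1-\alpha_m)\prod_{m'>m}\alpha_{m'}=(1-\alpha_m)\beta_{m+1}=\beta_{m+1}-\beta_m=p(k_m)$ to $k_m$; hence $H_p^{ij}(p_u)=p$. For the second identity I recall from the proof of Proposition~\ref{inv} that $dG_m^{\alpha_m,\sigma_m}(X)=\alpha_m\sum_l X^l Z_{\sigma_m(l)}^{m+1}$, together with $d\iota_1(Z_1^1)=\tfrac12\big(\tfrac{\partial}{\partial x^1}-\tfrac{\partial}{\partial x^2}\big)$, i.e.\ $Z_1^1=\tfrac12(Z_1^1-Z_2^1)$. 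Pushing this vector through the chain, a one-line induction gives
\[
dH_p^{ij}(Z_1^1)=\frac{\alpha_1\cdots\alpha_{n-1}}{2}\big(Z_i^n-Z_j^n\big)=\frac{p(i)+p(j)}{2}\big(Z_i^n-Z_j^n\big),
\]
which rearranges to the claimed formula.

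I do not expect a genuine obstacle here; the one point that needs care is that the scalings must have product exactly $p(i)+p(j)$ while each $\alpha_m$ stays strictly in $(0,1)$, and this is precisely what forces the telescoping choice $\beta_m=1-\sum_{l\ge m}p(k_l)$. Everything else — tracking the masses of the atoms and tracking the coefficients $X^l$ of the distinguished vector field through each $dG_m^{\alpha_m,\sigma_m}$ — is routine bookkeeping.
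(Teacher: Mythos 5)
Your construction is correct and essentially the same as the paper's: your telescoping choice $\alpha_m=\beta_m/\beta_{m+1}$ with $\prod_{l=m}^{n-1}\alpha_l=\beta_m=1-\sum_{l\geq m}p(k_l)$ is exactly the paper's inductive definition of the $\alpha_k$ (satisfying $\prod_{l=k}^{n-1}\alpha_l=1-\sum_{l=k+2}^{n+1}p(l)$), the only cosmetic difference being that the paper reduces to $(i,j)=(1,2)$ while you route the labels with a final permutation $\sigma_{n-1}$. The mass-tracking and the computation of $dH_p^{ij}(Z_1^1)$ that you spell out are the routine verifications the paper leaves implicit, and they check out.
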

%%%%%%%%%%%%%%%%%
%%%%%%%%%%%%%%%%%
%%%%%%%%%%%%%%%%%
\begin{proof}
It is enough to show the case $(i,j)=(1,2)$.
Given $p\in \mathcal{P}_n$,
define 
$\{\alpha_k\}_{k=1}^{n-1}$ inductively by
\[
\alpha_k:=
\begin{cases}
1-p(n+1) &\text{if\ } k={n-1},\\
1-\dfrac{p(k+2)}{\prod_{l={k+1}}^{n-1} \alpha_l} &\text{if\ } 1\leq k \leq n-2.
\end{cases}
\]
We find  that 
\begin{align*}
\prod_{l=k}^{n-1} \alpha_l &=1- \sum_{l=k+2}^{n+1} p(l) \quad \text{for}\ 1\leq k\leq n-1.
%%%%%
\end{align*}
Then  $H^{12}_p:=G_{n-1}^{\alpha_{n-1}}\circ \cdots \circ G_1^{\alpha}:\mathcal{P}_1 \to \mathcal{P}_n$
is a desired map.
\end{proof}
%%%%%%%%%%%%%%%%%%%%%%%%%%%%%
A similar argument provides  the following.
\begin{corollary}\label{keycor}
For $p\in \mathcal{P}_n$ with $n\geq 2$ and distinct $i,j,k\in \Omega_{n+1}$,
there is a composition $ H_p^{ijk}:\mathcal{P}_2 \to \mathcal{P}_n$
of scalar patched Markov  embeddings   such that 
\begin{align*}
%%%%
&H_p^{ijk}(q)=p \quad \text{for\ }q\in \mathcal{P}_2\text{\ defined by\ } (q(1), q(2),q(3))=\frac{(p(i),p(j),p(k))}{p(i)+p(j)+p(k)},\\
&
Z_i^n-Z_j^n=\frac{dH_p^{ijk}(Z_1^2-Z_2^2)}{p(i)+p(j)+p(k)},\qquad
%%%%%
Z_i^n-Z_k^n=\frac{dH_p^{ijk}(Z_1^2-Z_3^2)}{p(i)+p(j)+p(k)}.
%%%%%
\end{align*}
\end{corollary}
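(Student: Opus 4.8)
The plan is to repeat the construction from the proof of Proposition~\ref{key}, now with the three-point space $\mathcal{P}_2$ as the source and peeling off, one coordinate at a time, the $n-2$ entries of $p$ indexed by $\Omega_{n+1}\setminus\{i,j,k\}$. First I would reduce to the case $(i,j,k)=(1,2,3)$: given distinct $i,j,k\in\Omega_{n+1}$, choose $\pi\in\mathfrak{S}_{n+1}$ with $(\pi(1),\pi(2),\pi(3))=(i,j,k)$ and run the $(1,2,3)$-construction for the probability measure $p\circ\pi\in\mathcal{P}_n$. In the chain $G_{n-1}^{\alpha_{n-1},\mathrm{id}}\circ\cdots\circ G_2^{\alpha_2,\mathrm{id}}$ produced there, replacing the identity permutation in the last factor by $\pi$ turns it into $G_{n-1}^{\alpha_{n-1},\pi}$ and amounts to post-composing the whole map with the coordinate permutation induced by $\pi$; since $(p\circ\pi)(1)+(p\circ\pi)(2)+(p\circ\pi)(3)=p(i)+p(j)+p(k)$, the normalizing constant and the starting point $q$ are unchanged, and the formula $dG_k^{\alpha,\sigma}(X)=\alpha\sum_l X^l Z^{k+1}_{\sigma(l)}$ from the proof of Proposition~\ref{inv} turns $Z_1^2-Z_2^2$ into $Z_i^n-Z_j^n$ and $Z_1^2-Z_3^2$ into $Z_i^n-Z_k^n$ at the last step.

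For the case $(i,j,k)=(1,2,3)$ I would set, exactly as in Proposition~\ref{key}, $\alpha_{n-1}:=1-p(n+1)$ and $\alpha_k:=1-p(k+2)/\prod_{l=k+1}^{n-1}\alpha_l$ for $2\le k\le n-2$. A descending induction on $k$ gives $\prod_{l=k}^{n-1}\alpha_l=\sum_{l=1}^{k+1}p(l)\in(0,1)$ for $2\le k\le n-1$, hence $\alpha_k=\big(\sum_{l=1}^{k+1}p(l)\big)/\big(\sum_{l=1}^{k+2}p(l)\big)\in(0,1)$ and $\prod_{l=2}^{n-1}\alpha_l=p(1)+p(2)+p(3)$. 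Put $H_p^{123}:=G_{n-1}^{\alpha_{n-1}}\circ\cdots\circ G_2^{\alpha_2}:\mathcal{P}_2\to\mathcal{P}_n$, which is the identity when $n=2$. Pushing $q\in\mathcal{P}_2$ through this chain, the resulting measure assigns $\big(\prod_{l=2}^{n-1}\alpha_l\big)q(m)$ to $m\in\{1,2,3\}$, assigns $\big(1-\alpha_{m-2}\big)\prod_{l=m-1}^{n-1}\alpha_l$ to $m\in\{4,\dots,n\}$, and assigns $1-\alpha_{n-1}$ to $m=n+1$; with the prescribed $q$ and the recursion above each of these equals $p(m)$, so $H_p^{123}(q)=p$. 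Since every factor uses the identity permutation and $1,2,3\in\Omega_{k+1}$ for all $k\ge2$, the same formula from Proposition~\ref{inv} yields $dG_k^{\alpha_k}(Z_1^k-Z_a^k)=\alpha_k(Z_1^{k+1}-Z_a^{k+1})$ for $a\in\{2,3\}$, so that $dH_p^{123}(Z_1^2-Z_a^2)=\big(\prod_{l=2}^{n-1}\alpha_l\big)(Z_1^n-Z_a^n)=(p(1)+p(2)+p(3))(Z_1^n-Z_a^n)$, which is the asserted identity.

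I expect the one genuinely delicate point to be the reduction step: one must verify that absorbing the permutation $\pi$ into the final scalar patched Markov embedding really keeps the map a composition of scalar patched Markov embeddings (and not merely such a composition post-composed with a coordinate permutation, which is not itself of the required form), and that this rewriting is compatible with the tracking of $Z_1^2-Z_2^2$ and $Z_1^2-Z_3^2$. Everything else is a line-by-line transcription of the proof of Proposition~\ref{key}, so I would keep the written argument short.
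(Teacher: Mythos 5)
Your argument is correct and is precisely the ``similar argument'' the paper alludes to: the paper gives no proof of Corollary~\ref{keycor} beyond pointing back to Proposition~\ref{key}, and your transcription (the recursion giving $\alpha_k=\bigl(\sum_{l=1}^{k+1}p(l)\bigr)/\bigl(\sum_{l=1}^{k+2}p(l)\bigr)$ for $2\le k\le n-1$, the telescoping product $\prod_{l=2}^{n-1}\alpha_l=p(1)+p(2)+p(3)$, and the absorption of $\pi$ into the last factor $G_{n-1}^{\alpha_{n-1},\pi}$, which is itself a scalar patched Markov embedding by Definition~\ref{patch}) checks out line by line, including the differential computation $dG_k^{\alpha,\sigma}(Z_1^k-Z_a^k)=\alpha\bigl(Z_{\sigma(1)}^{k+1}-Z_{\sigma(a)}^{k+1}\bigr)$. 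The only caveat is the case $n=2$ with $(i,j,k)\neq(1,2,3)$, exactly the delicate point you flagged: there the chain is empty, so there is no last factor into which $\pi$ can be absorbed, and in fact no composition of scalar patched Markov embeddings maps $\mathcal{P}_2$ to $\mathcal{P}_2$ other than the identity, so the statement as literally phrased fails there; this is a defect of the corollary itself (already present in Proposition~\ref{key} at $n=1$ with $(i,j)=(2,1)$) rather than of your argument, though it is worth noting because the paper does invoke the corollary with the triple $(n+1,i,j)$ at $n=2$ in the proof of Theorem~\ref{diag}.
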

%%%%%%%%%%%%%%%%%%%%%%%%%

\section{Main Theorem}
An invariant family  under  Markov embeddings is uniquely determined (Theorem \ref{cencov}),
but there are at least two types of invariant families  under  scalar patched Markov  embeddings
(Proposition~\ref{inv}).
The reason of this difference can be found  in Remark~\ref{mke}.
On one hand, for a Markov embedding $F^N_n:\mathcal{P}_n \to \mathcal{P}_N$ with $n\leq N$, 
%%%%
a related Fisher--Neyman sufficient statistic $\kappa: \Omega_{N+1}\to\Omega_{n+1}$ always exists.
%%%%
On the other hand,  for  a composition $G:\mathcal{P}_{n} \to \mathcal{P}_{N}$ of scalar patched Markov  embeddings, 
a related Fisher--Neyman sufficient statistic $\kappa: \Omega_{N+1}\to\Omega_{n+1}$ exists only if $N\geq n+1$.

In this section, we first formulate a conformal  (resp.\ parallel) condition 
in order to classify invariant families under scalar patched Markov embeddings.
%%%
%%%
Then we show existence and uniqueness of  invariant families under  scalar patched Markov  embeddings
which satisfies either the  conformal condition or the parallel condition.

Let  $A_n$ be  a symmetric $(0,2)$-tensor field  on $\mathcal{P}_{n}$ 
such  that  $\{A_n\}_{n\in \mathbb{N}}$ is invariant  under scalar patched Markov embeddings, 
that is, 
the pullback of $A_{n+1}$ by   each scalar patched Markov  embedding  from $\mathcal{P}_n$ to $\mathcal{P}_{n+1}$ is always $A_n$.
%%%%%%%
%%%%%%%
Given $\sigma \in\mathfrak{S}_3$ and $u\in(0,1)$, define a map $\psi^\sigma_u: (0,1) \to \mathcal{P}_2 $ by 
\[
\psi^\sigma_u(\alpha):=G_1^{\alpha, \sigma}(p_u)=
\alpha u \delta_{\sigma(1)}
+\alpha (1-u) \delta_{\sigma(2)}+(1-\alpha)\delta_{\sigma(3)}
\quad
\text{for any $\alpha\in(0,1)$.}
\]
For  $\alpha\in(0,1)$ and distinct $i,j,k \in \Omega_3$, 
define $u_{\alpha}^{ij} \in (0,1)$  by 
\begin{equation}\label{uij}
u_{\alpha}^{ij}:
=\frac{\psi^{\mathrm{id}}_u(\alpha)(i)}{\psi^{\mathrm{id}}_u(\alpha)(i)+\psi^{\mathrm{id}}_u(\alpha)(j)}
=\frac{\psi_u^\sigma(\alpha)(\sigma(i))}{\psi_u^\sigma(\alpha)(\sigma(i))+\psi_u^\sigma(u)(\sigma(j))}.
\end{equation}
Let $E^{k}$ be the  one-dimensional  subbundle of $T\mathcal{P}_2$ spanned by $Z_{i}^2-Z_{j}^2$.
%%%%%%
Then Proposition~\ref{key} leads to  
$H^{\sigma(i)\sigma(j)}_{\psi^\sigma_u(\alpha)}(p_{u_{\alpha}^{ij}}) =\psi^\sigma_u(\alpha)$ and 
\begin{align} \label{27}
dH^{\sigma(i)\sigma(j)}_{\psi^\sigma_u(\alpha)}(Z_1^1)
&=\frac{\psi^{\sigma}_u(\alpha)(\sigma(i))+\psi^{\sigma}_u(\alpha)(\sigma(j))}{2}
\left(Z_{\sigma(i)}^2-Z_{\sigma(j)}^2\right) \in E^{\sigma(k)},
\end{align}
hence  the differential map of $H^{\sigma(i)\sigma(j)}_{\psi^\sigma_u(\alpha)}$ at $p_{u_{\alpha}^{ij}}$  gives
an isometry from $(T\mathcal{P}_1,A_1)$ at $p_{u_{\alpha}^{ij}}$
to $(E^{\sigma(k)}, A_2)$ at ${\psi^\sigma_u(\alpha)}$. 
%%%%%%%%%%%%%%%

%%%%
As for a conformal condition, 
we consider the relation between the two pairs $(E^{\sigma(1)}, E^{\sigma(2)} )$ and $(E^{\sigma(1)}, E^{\sigma(3)})$ with respect to $A_2$.
%%%
For  any $W_{\sigma(3)}\in E^{\sigma(3)}$, 
there exists  a unique pair $(W_{\sigma(1)},W_{\sigma(2)}) \in E^{\sigma(1)} \times E^{\sigma(2)}$  such that 
$W_{\sigma(3)}=W_{\sigma(1)}+W_{\sigma(2)}$.
%%%%%
In the case  $W_{\sigma(3)}\neq 0$, 
we deduce from Proposition~\ref{key} that
\begin{align*}
&A_2\left( \left(W_{\sigma(3)}\right)_{\psi^{\sigma}_u(\alpha)}, \left(W_{\sigma(3)}\right)_{\psi^{\sigma}_u(\alpha)}\right)\\
&=
A_2\left( \left(W_{\sigma(3)}\right)_{\psi^{\sigma}_u(\alpha)}, \left(W_{\sigma(1)}\right)_{\psi^{\sigma}_u(\alpha)}\right)
+
A_2\left( \left(W_{\sigma(3)}\right)_{\psi^{\sigma}_u(\alpha)}, \left(W_{\sigma(2)}\right)_{\psi^{\sigma}_u(\alpha)}\right)
\end{align*}
is nonzero if and only if $A_1$ is nondegenerate at $p_{u_{\alpha}^{12}}$.
%%%
Moreover, for  general $W_{\sigma(3)}$,  
it follows from Proposition \ref{invariant}~(1) that
\begin{equation}\label{rat}
\begin{split}
&A_2\left( \left(W_{\sigma(3)}\right)_{\psi^{\sigma}_u(\alpha)}, \left(W_{\sigma(1)}\right)_{\psi^{\sigma}_u(\alpha)}\right)
+
A_2\left( \left(W_{\sigma(3)}\right)_{\psi^{\sigma}_u(\alpha)}, \left(W_{\sigma(2)}\right)_{\psi^{\sigma}_u(\alpha)}\right)\\
%%%%
&=A_2\left( \left(W_{\sigma(3)}\right)_{\psi^{\sigma}_{1-u}(\alpha)}, \left(W_{\sigma(1)}\right)_{\psi^{\sigma}_{1-u}(\alpha)}\right)
+
A_2\left( \left(W_{\sigma(3)}\right)_{\psi^{\sigma}_{1-u}(\alpha)}, \left(W_{\sigma(2)}\right)_{\psi^{\sigma}_{1-u}(\alpha)}\right).
\end{split}
\end{equation}
For  $\beta \in (0,1)$
and  the transposition $(\sigma(i), \sigma(j)) \in \mathfrak{S}_3$, 
since we have 
\[
G_2^{\beta}(\psi^{\sigma}_u(\alpha) )
=G_2^{\beta, (\sigma(1),\sigma(2)   )}(\psi^{\sigma}_{1-u}(\alpha) ),
\]
%%%%%%%
we observe from the invariance $\{A_n\}_{n\in \mathbb{N}}$ that
\begin{equation}\label{sym}
\begin{split}
&A_2\left( \left(W_{\sigma(3)}\right)_{\psi^{\sigma}_u(\alpha)}, \left(W_{\sigma(1)}\right)_{\psi^{\sigma}_u(\alpha)}\right)\\
&=
A_3\left( dG_2^{\beta}\left( \left(W_{\sigma(3)}\right)_{\psi^{\sigma}_u(\alpha)}\right), dG_2^{\beta}\left(\left(W_{\sigma(1)}\right)_{\psi^{\sigma}_u(\alpha)}\right)\right) \\
%%%%%%%%%%%%%%
&=
A_3\left(
dG_2^{\beta, (\sigma(1),\sigma(2) )}\left( \left(W_{\sigma(3)}\right)_{\psi^{\sigma}_{1-u}(\alpha)} \right), 
dG_2^{\beta, (\sigma(1),\sigma(2) )}\left( \left(W_{\sigma(2)}\right)_{\psi^{\sigma}_{1-u}(\alpha)}\right)\right)\\
&=
A_2\left( \left(W_{\sigma(3)}\right)_{\psi^{\sigma}_{1-u}(\alpha)}, \left(W_{\sigma(2)}\right)_{\psi^{\sigma}_{1-u}(\alpha)}\right).
\end{split}
\end{equation}
%%%
For $W_{\sigma(3)}\in E^{\sigma(3)}$ with $W_{\sigma(3)}\neq 0$, 
 \eqref{rat} and \eqref{sym} provide that the ratio $r(  \psi^{\sigma}_u(\alpha) )$ between
\[
A_2\left( \left(W_{\sigma(3)}\right)_{\psi^{\sigma}_u(\alpha)}, \left(W_{\sigma(1)}\right)_{\psi^{\sigma}_u(\alpha)}\right) 
\qquad\text{and }\qquad
A_2\left( \left(W_{\sigma(3)}\right)_{\psi^{\sigma}_{u}(\alpha)}, \left(W_{\sigma(2)}\right)_{\psi^{\sigma}_{u}(\alpha)}\right)
\]
satisfies 
\[
r\left(  \psi^{\sigma}_u(\alpha) \right) \cdot r\left(  \psi^{\sigma}_{1-u}(\alpha) \right)=1
\qquad
\left(\text{resp.\ } r(  \psi^{\sigma}_{1/2}(\alpha) )\equiv 1\right)
\]
if $A_1$ at  $p_{u_{\alpha}^{12}}$  (resp. at $b_1$) is nondegenerate, where we put $\infty\cdot 0:=1$ by convention.
We assume that the ratio $r(  \psi^{\sigma}_u(\alpha) )$   depends not $\sigma\in \mathfrak{S}_3$ and $\alpha\in (0,1)$, 
but only on  the ratio between $u$ and $1-u$.
%%%%%%%%%%%%%%
\begin{enumerate}
\setlength{\leftskip}{-7pt}
\item[{\bf (C1)}]
There exists $r:(0,\infty)\to (0,\infty)$ such that $r(t)r(1/t)=1$  and 
\[
A_2\left( \left(W_{\sigma(3)}\right)_{\psi^{\sigma}_u(\alpha)}, \left(W_{\sigma(1)}\right)_{\psi^{\sigma}_u(\alpha)}\right)
=r\left(\frac{u}{1-u}\right)\cdot
A_2\left( \left(W_{\sigma(3)}\right)_{\psi^{\sigma}_{u}(\alpha)}, \left(W_{\sigma(2)}\right)_{\psi^{\sigma}_{u}(\alpha)}\right)
\]
for $\alpha,u \in (0,1), \sigma \in \mathfrak{S}_3$ and $W_{\sigma(3)}\in E^{\sigma(3)}$ 
decomposed as 
$W_{\sigma(3)}=W_{\sigma(1)}+W_{\sigma(2)}$,
where 
$W_\sigma(1)\in E^{\sigma(1)}$ and $W_\sigma(2)\in E^{\sigma(2)}$.
\end{enumerate}
%%%%%%%%%%%
Note that $A_2^d$ satisfies {\bf (C1)} with $r(t)=t^2$.

We next discuss a parallel condition.
%%%
If $A_1$ is positive definite, 
then $Z:=Z_1^1/\sqrt{A_1(Z_1^1,Z_1^1)}$ is parallel along 
the curve $\{p_u\}_{u\in (0,1)}$ with respect to $A_1$.
Due to the  invariance of $\{A_n\}_{n\in \mathbb{N}}$, 
$dG_1^{\alpha, \sigma}(Z)$  is a parallel vector filed along the curve 
$\{G_1^{\alpha, \sigma}(p_u)= \psi^\sigma_u(\alpha) \}_{u\in (0,1)}$ with respect to~$A_2$.
%%%
We observe  from \eqref{27} that  $E^{\sigma(k)}$ consists of parallel vector fields.
%%%
By the positive definiteness of $A_1$, 
%and 
$(E^{\sigma(i)}_{\psi^\sigma_u(\alpha)}, A_2)$ and  $(E^{\sigma(j)}_{\psi^\sigma_u(\alpha)}, A_2)$ are isometric to each other.
%%%%
%%%%
Based on this observation, it is tempting to assume that  $E^{\sigma(i)}$ and  $E^{\sigma(j)}$
are parallel at each point in~$\mathcal{P}_2$.
However,  if $\psi^\sigma_u(\alpha)=b_2$,
then  $E^{\sigma(i)}_{\psi^\sigma_u(\alpha)}$ and  $E^{\sigma(j)}_{\psi^\sigma_u(\alpha)}$ are not parallel
since  equality in the Cauchy--Schwarz inequality of the form
\begin{align*}
&A_2\left( 
dH^{\sigma(i)\sigma(j)}_{\psi^\sigma_u(\alpha)} \left(\left(Z_1^1\right)_{u^{ij}_\alpha }\right),\,
dH^{\sigma(k)\sigma(l)}_{\psi^\sigma_u(\alpha)} \left(\left(Z_1^1\right)_{u^{kl}_\alpha }\right)
\right)^2\\
&\leq 
A_2\left( 
dH^{\sigma(i)\sigma(j)}_{\psi^\sigma_u(\alpha)} \left(\left(Z_1^1\right)_{u^{ij}_\alpha }\right),\,
dH^{\sigma(i)\sigma(j)}_{\psi^\sigma_u(\alpha)} \left(\left(Z_1^1\right)_{u^{ij}_\alpha }\right)
\right)\\
&\qquad\times
A_2\left( 
dH^{\sigma(k)\sigma(l)}_{\psi^\sigma_u(\alpha)} \left(\left(Z_1^1\right)_{u^{kl}_\alpha }\right),\,
dH^{\sigma(k)\sigma(l)}_{\psi^\sigma_u(\alpha)} \left(\left(Z_1^1\right)_{u^{kl}_\alpha }\right)
\right)\\
%%%%%%
&=A_1\left(  \left(Z_1^1\right)_{u^{ij}_\alpha }, \left(Z_1^1\right)_{u^{ij}_\alpha }\right) 
\times
A_1\left(\left(Z_1^1\right)_{u^{kl}_\alpha }, \left(Z_1^1\right)_{u^{kl}_\alpha }\right)
\end{align*}
does not hold for two pairs of distinct points $(i,j),(k,l)\in \Omega_3$  with $\{i,j\}\neq \{k,l\}$ by Proposition~\ref{invariant}~(2).
%%%%%%%
To have a parallel condition, 
we assume that $A_1$ is positive semidefinite and  degenerate at $b_1$.
%%%%
Then  the behavior of $A_1( (Z_1^1)_u,(Z_1^1)_u)$ may change $u=1/2$ .
We regard this change  as the change of the sign of $2u-1$,
where we put  $\mathrm{sgn}(0):=0$ by convention.
%%%%%%%%%%
\begin{enumerate}
\setlength{\leftskip}{-7pt}
\item[{\bf (C2)}]
In addition to being positive semidefinite, $A_1$  is degenerate at $b_1$.  
Given two pairs of distinct points $(i, j), (k,l) \in \Omega_3$, 
\begin{align*}
&A_2\left( 
dH^{\sigma(i)\sigma(j)}_{\psi^\sigma_u(\alpha)} \left(\left(Z_1^1\right)_{u^{ij}_\alpha }\right),\,
dH^{\sigma(k)\sigma(l)}_{\psi^\sigma_u(\alpha)} \left(\left(Z_1^1\right)_{u^{kl}_\alpha }\right)
\right)\\
&=
\mathrm{sgn}\left(2u_\alpha^{ij}-1\right)
\sqrt{
A_1\left(  \left(Z_1^1\right)_{u^{ij}_\alpha }, \left(Z_1^1\right)_{u^{ij}_\alpha }\right) }
\times
\mathrm{sgn}\left(2u_\alpha^{kl}-1\right)
\sqrt{A_1\left(\left(Z_1^1\right)_{u^{kl}_\alpha }, \left(Z_1^1\right)_{u^{kl}_\alpha }\right)}
%%%%%%
\end{align*}
%%%%
holds for  $\alpha,u \in (0,1)$ and  $\sigma \in \mathfrak{S}_3$,
where $u^{ij}_\alpha$ is defined  by \eqref{uij}.
\end{enumerate}
%%%%%%%%%%%%
It is easy to check that   $A_1^s$ and $A_2^s$ satisfy {\bf (C2)}.

%%%%%%%%%%%%
We are now ready to state the main theorem of this paper.
%%%%%%%%%%%%
%%%%%%%%%%%%
\begin{theorem}\label{diag}
For $n\in \mathbb{N}$, let $A_n$ be a symmetric  $(0,2)$-tensor field on $\mathcal{P}_{n}$.
Assume that for any $n\in \mathbb{N}$ and  each scalar patched Markov  embedding  $G_n:\mathcal{P}_n \to \mathcal{P}_{n+1}$, 
the pullback of $A_{n+1}$ by  $G_n$ is always $A_n$.
\begin{enumerate}
\setlength{\leftskip}{-7pt}
\item
If $A_2$ satisfies {\bf (C1)},
then  there exists $\lambda \in \mathbb{R}$  independent of $n\in \mathbb{N}$  such that  $A_n=\lambda A_n^{d}$.
\item
If $A_1$ and $A_2$ satisfy {\bf (C2)},
then  there exists $\mu \geq 0$  independent of $n\in \mathbb{N}$  such that  $A_n=\mu A_n^{s}$.
\end{enumerate}
\end{theorem}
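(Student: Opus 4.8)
The plan is to bootstrap from dimension one and two up to general $n$ using the compositions $H_p^{ij}$ and $H_p^{ijk}$ of Proposition~\ref{key} and Corollary~\ref{keycor}, together with the diagonalization implicit in \eqref{zero}. The common skeleton for both cases is: first determine $A_1$ on $\mathcal{P}_1$ up to a constant; second, use the isometry property of $dH^{\sigma(i)\sigma(j)}_{\psi^\sigma_u(\alpha)}$ from $(T\mathcal{P}_1,A_1)$ onto $(E^{\sigma(k)},A_2)$ to pin down the diagonal entries $A_2(Z_i^2-Z_j^2,\,Z_i^2-Z_j^2)$ at every point of $\mathcal{P}_2$; third, use the extra hypothesis \textbf{(C1)} or \textbf{(C2)} to control the off-diagonal cross-terms $A_2(W_{\sigma(i)},W_{\sigma(j)})$ for the three subbundles $E^{\sigma(1)},E^{\sigma(2)},E^{\sigma(3)}$, thereby determining $A_2$ completely on $\mathcal{P}_2$; and fourth, for $n\geq 3$, express an arbitrary pair of tangent vectors at $p\in\mathcal{P}_n$ via the basis $\{Z_i^n-Z_j^n\}$ and use $H_p^{ij}$ (for the squared norms) and $H_p^{ijk}$ (for the cross terms $A_n(Z_i^n-Z_j^n,\,Z_i^n-Z_k^n)$, pulled back to $A_2$ of two vectors in distinct $E$-subbundles) to reduce everything to the already-known $A_1$ and $A_2$. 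Matching with $A_n^d$ or $A_n^s$ evaluated in the same basis then forces $A_n=\lambda A_n^d$ or $A_n=\mu A_n^s$; independence of $n$ of the constant follows from Proposition~\ref{invariant}~(2) applied at the barycenter $b_n$.

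For part (1), I would argue as follows. Fix $\sigma$; by \eqref{27} the differential of $H^{\sigma(1)\sigma(2)}_{\psi^\sigma_u(\alpha)}$ sends $(Z_1^1)_{u^{12}_\alpha}$ to a multiple of $Z^2_{\sigma(1)}-Z^2_{\sigma(2)}\in E^{\sigma(3)}$ and is an isometry onto $(E^{\sigma(3)},A_2)$, so $A_2$ restricted to each $E^{\sigma(k)}$ is governed by the single function $u\mapsto A_1((Z_1^1)_u,(Z_1^1)_u)$; running over $\sigma\in\mathfrak{S}_3$ these diagonal blocks are mutually consistent. Hypothesis \textbf{(C1)} provides $r:(0,\infty)\to(0,\infty)$ with $r(t)r(1/t)=1$ controlling the cross term $A_2(W_{\sigma(3)},W_{\sigma(1)})$ versus $A_2(W_{\sigma(3)},W_{\sigma(2)})$; since any $W_{\sigma(3)}\in E^{\sigma(3)}$ decomposes uniquely as $W_{\sigma(1)}+W_{\sigma(2)}$ and $A_2(W_{\sigma(3)},W_{\sigma(3)})=A_2(W_{\sigma(3)},W_{\sigma(1)})+A_2(W_{\sigma(3)},W_{\sigma(2)})$, the known left-hand side plus the ratio $r$ determines both summands. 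Cyclically permuting $\sigma$ then gives a consistent system for all pairwise products $A_2(W_{\sigma(i)},W_{\sigma(j)})$ in terms of the diagonal data; crucially, the compatibility of this overdetermined system should force $A_1((Z_1^1)_u,(Z_1^1)_u)$ to be proportional to $u^{-2}+(1-u)^{-2}$ — i.e.\ to $A_1^d$ — and simultaneously force $r(t)=t^2$, killing any $A^s$ component. The ratio of the diagonal value to the cross value being a fixed number (forced by invariance at $b_1$, where $r(1)=1$) is what excludes the $\mu A_n^s$ piece. Then the fourth step above extends $A_n=\lambda A_n^d$ to all $n$.

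For part (2), the degeneracy of $A_1$ at $b_1$ means $A_1((Z_1^1)_{1/2},(Z_1^1)_{1/2})=0$, and positive semidefiniteness forces $A_1((Z_1^1)_u,(Z_1^1)_u)$ to vanish to even order and be expressible via a single ``square root'' branch whose sign is tracked by $\mathrm{sgn}(2u-1)$; hypothesis \textbf{(C2)} then says the polarized quantity $A_2(dH^{\sigma(i)\sigma(j)}(\cdot),\,dH^{\sigma(k)\sigma(l)}(\cdot))$ factors as a product of signed square roots, i.e.\ $A_2$ on $\mathcal{P}_2$ has rank one at each point, with its radical determined by the common value. Writing $A_1((Z_1^1)_u,(Z_1^1)_u)=\varphi(u)^2$ and using that $\psi^\sigma_u(\alpha)(\sigma(i))=\alpha\,p_u(i)$ for $i\in\{1,2\}$ together with the invariance of $\{A_n\}$, the functional equation coming from \textbf{(C2)} across different pairs $(i,j)$ should force $\varphi(u)=\mathrm{const}\cdot(u^{-1}-(1-u)^{-1})$ up to sign, i.e.\ $A_1=\mu A_1^s$ with $\mu\geq0$; the rank-one gluing over $\sigma\in\mathfrak{S}_3$ then yields $A_2=\mu A_2^s$, and the bootstrap via $H_p^{ij},H_p^{ijk}$ gives $A_n=\mu A_n^s$ for all $n$, with $n$-independence again from Proposition~\ref{invariant}~(2).

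The main obstacle I anticipate is the third step in each case: extracting, from the functional equations \textbf{(C1)} (with $r(t)r(1/t)=1$) or \textbf{(C2)} (the signed-square-root product identity), the \emph{explicit} form of the one-variable function $u\mapsto A_1((Z_1^1)_u,(Z_1^1)_u)$ — namely that it must be a constant multiple of $u^{-2}+(1-u)^{-2}$ in case (1) or of $(u^{-1}-(1-u)^{-1})^2$ in case (2) — and simultaneously that $r(t)=t^2$ in case (1). This requires carefully exploiting the three different reductions $H^{\sigma(i)\sigma(j)}_{\psi^\sigma_u(\alpha)}$ for the three choices of pair inside $\Omega_3$ (so that the same point $\psi^\sigma_u(\alpha)$ is reached with three different ``one-dimensional coordinates'' $u^{12}_\alpha, u^{13}_\alpha, u^{23}_\alpha$), and showing the resulting overdetermined system of relations admits only the claimed solution; the bilinearity bookkeeping between the basis $\{Z_i^2-Z_j^2\}$ and the subbundle decomposition $E^{\sigma(1)}\oplus E^{\sigma(2)}$, and the comparison with the closed forms of $A_2^d$ and $A_2^s$ in that basis, is where the real work lies, but it is a finite computation once the functional-equation step is done.
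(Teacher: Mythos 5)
Your architecture---determine $A_1$, transport it into the diagonal blocks $E^{\sigma(k)}$ of $A_2$ via the isometries $dH^{\sigma(i)\sigma(j)}_{\psi^\sigma_u(\alpha)}$, use \textbf{(C1)} or \textbf{(C2)} to control the cross terms between the three subbundles, then bootstrap to $\mathcal{P}_n$ through \eqref{A}, Proposition~\ref{key} and Corollary~\ref{keycor}---coincides with the paper's. But the step you yourself flag as ``where the real work lies'' is exactly where the proposal stops being a proof: you assert that the overdetermined system \emph{should} force $A_1(Z_u,Z_u)\propto u^{-2}+(1-u)^{-2}$ in case (1) and $\propto\bigl(u^{-1}-(1-u)^{-1}\bigr)^2$ in case (2), without exhibiting the mechanism, and the mechanism is not a routine compatibility check. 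In case (1) the paper never proves $r(t)=t^2$ and does not need to. Writing $Z:=Z_1^1$, $p^\alpha_u:=\psi^{\mathrm{id}}_u(\alpha)$, $W_1:=Z_3^2-Z_2^2$, $W_2:=Z_1^2-Z_3^2$, $W_3:=Z_1^2-Z_2^2=W_1+W_2$, it derives
\[
4u^2A_1(Z_u,Z_u)=p^\alpha_u(1)^2A_2\bigl((W_3)_{p^\alpha_u},(W_3)_{p^\alpha_u}\bigr)=\Bigl\{r\Bigl(\tfrac{u}{1-u}\Bigr)+1\Bigr\}\,p^\alpha_u(1)^2A_2\bigl((W_3)_{p^\alpha_u},(W_2)_{p^\alpha_u}\bigr),
\]
whose left side does not involve $\alpha$, so $p^\alpha_u(1)^2A_2\bigl((W_3)_{p^\alpha_u},(W_2)_{p^\alpha_u}\bigr)$ is independent of $\alpha$; it then re-reads the \emph{same} point $p^\alpha_u$ through the pair $(1,3)$ with coordinate $v(\alpha,u)$ and tunes $\alpha=1/(1+u)$ so that $v=1/2$, where $r(1)=1$ eliminates the unknown ratio and identifies the $\alpha$-independent quantity with the constant $\lambda=\tfrac12A_1(Z_{1/2},Z_{1/2})$. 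Everything else, including the explicit form of $A_1(Z_u,Z_u)$ and of the cross terms $A_2((W_3)_q,(W_2)_q)=\lambda/q(1)^2$, falls out of this one identity; $r$ itself is never computed.

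In case (2) the missing device is an antisymmetrization. Setting $M(u):=\mathrm{sgn}(2u-1)\sqrt{A_1(Z_u,Z_u)}$, so that $M(u)=-M(1-u)$ by Proposition~\ref{invariant}~(1), the decomposition $W_3=W_1+W_2$ at $q\in\mathcal{P}_2$ combined with \textbf{(C2)} yields
\[
M\Bigl(\tfrac{vw}{vw+(1-v)(1-w)}\Bigr)=\tfrac{vw+(1-v)(1-w)}{1-v}\,M(w)+\tfrac{vw+(1-v)(1-w)}{w}\,M(v),
\]
with $v,w$ the $(1,3)$- and $(3,2)$-coordinates of $q$; since the left side is symmetric in $(v,w)$, swapping $v$ and $w$ and subtracting separates the variables into $\bigl(\tfrac1w-\tfrac1{1-w}\bigr)M(v)=\bigl(\tfrac1v-\tfrac1{1-v}\bigr)M(w)$, which pins down $M(u)=\tfrac{\sqrt\mu}{2}\bigl(\tfrac1{1-u}-\tfrac1u\bigr)$. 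Without these two devices---the $\alpha$-independence plus the $v=1/2$ normalization in case (1), and the swap-and-subtract separation in case (2)---your ``finite computation once the functional-equation step is done'' never gets started. So, although the surrounding scaffolding matches the paper's proof, the proposal has a genuine gap at its central step.
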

%%%%%%%%%%%%%%%%%%
\begin{proof}
In this proof, set $Z:=Z_1^1 \in \mathfrak{X}(\mathcal{P}_1)$. 
Fix 
\[
W_1:=Z_3^2-Z_2^2\in E^{1}, \qquad
W_2:=Z_1^2-Z_3^2\in E^{2},\qquad
W_3:=Z_1^2-Z_2^2\in E^{3}.
\]
Then we have  $W_3=W_1+W_2$ on  $\mathcal{P}_2$.
Taking $p\in \mathcal{P}_n$ and  distinct $i,j \in \Omega_n$, 
we define $u^i\in  (0,1)$ and $q^{ij} \in \mathcal{P}_2$   by
\[
u^i=\frac{p(n+1)}{p(n+1)+p(i)},\qquad
%%%%
\left(
q^{ij}(1),
q^{ij}(2),
q^{ij}(3)
\right)
=\frac{\left(p(n+1), p(i),p(j) \right)    }{p(n+1)+p(i)+p(j)}, 
\]
respectively.
It follows from Proposition~\ref{key} and Corollary \ref{keycor} that
%%%%%%%%%%%%%
%%%%%%%%%%%%%%%%%
\begin{equation}\label{Z}
\begin{split}
(Z_{n+1}^n-Z_i^n)_p
&
=\frac{2d H_p^{n+1i} \left( Z_{u^i}\right)}{p(n+1)+p(i)}
=\frac{d H_p^{n+1ij} \left( \left(W_3\right)_{q^{ij}}\right)}{p(n+1)+p(i)+p(j)}
=\frac{d H_p^{n+1ij} \left( 2dH^{12}_{q^{ij}}(Z_{u^i})\right)}{p(n+1)+p(i)},\\
(Z_{n+1}^n-Z_j^n)_p
&
=\frac{2d H_p^{n+1j} \left( Z_{u^j}\right)}{p(n+1)+p(i)}
=\frac{d H_p^{n+1ij} \left( \left(W_2\right)_{q^{ij}}\right)}{p(n+1)+p(i)+p(j)}
=\frac{d H_p^{n+1ij} \left( 2dH^{13}_{q^{ij}}(Z_{u^j}) \right)}{p(n+1)+p(j)}.
\end{split}
\end{equation}
%%%%%%%%%%%%%%%%%%%%%%%
Note that applying  \eqref{zero} yields 
%%%%%
\begin{align}\label{A}
A_n(X_p, Y_p)
&=\sum_{i,j\in \Omega_n} X^i_pY^j_p A_n\left( (Z_{n+1}^n-Z_i^n)_p, (Z_{n+1}^n-Z_j^n)_p\right)\quad
%%%%%%
\text{for any\ }X,Y\in\mathfrak{X}(\mathcal{P}_n).
\end{align}
%%%%%%%%%%%

%%%%
Assume {\bf (C1)}.
%%%
For $\alpha, u \in (0,1)$, set $p^\alpha_u:=\psi^{\mathrm{id}}_u(\alpha) \in \mathcal{P}_2$.
%%%
We  observe from Proposition~\ref{key}  that  
\begin{align}\label{ratio}
4u^2 A_1\left( Z_{u},Z_{u}\right)\notag
&=
4\left(\frac{p_u^\alpha(1)}{p_u^\alpha(1)+p_u^\alpha(2)}\right)^2
A_1\left( Z_{u},Z_{u}\right)\\ \notag
%%%%
&=
\frac{4p_u^\alpha(1)^2}{(p_u^\alpha(1)+p_u^\alpha(2))^2}
A_2 \left(dH^{12}_{p_u^\alpha}\left(Z_{u}\right),
dH^{12}_{p_u^\alpha}\left(Z_{u}\right) \right)\\ 
%%%%
&=
p_u^\alpha(1)^2
A_2\left( (W_3)_{p^\alpha_u},  (W_3)_{p^\alpha_u}\right)\\ \notag
%%%%
&=\left\{ r\left( \frac{u}{1-u}  \right)+1\right\}
p_u^\alpha(1)^2 A_2\left( (W_3)_{p^\alpha_u},  (W_2)_{p^\alpha_u}\right),
\end{align}
%%%
providing that $p^\alpha_u(1)^2A_2\left((W_3)_{p^\alpha_u},(W_2)_{p^\alpha_u}\right)$  depends 
not on $\alpha \in (0,1)$ but only on  the ratio between $u$ and $1-u$.
Similarly, for
\[
v(\alpha,u):=\frac{p_u^\alpha(1)}{p_u^\alpha(1)+p_u^\alpha(3)}\in(0,1),
\]
we have $p_u^\alpha=\psi_u^{\mathrm{id}}(\alpha) =  \psi^{(2,3)}_{v(\alpha,u)}(p_u^\alpha(1)+p_u^\alpha(3))$ and 
\begin{align*}
4v(\alpha, u)^2
A_1\left( Z_{v(\alpha,u)}, Z_{v(\alpha,u)}\right)
%%%%%%%
&=
p_u^\alpha(1)^2
A_2\left( (W_3)_{p^\alpha_u},  (W_3)_{p^\alpha_u}\right)\\
%%%%%%%
&=\left\{ r\left( \frac{v(\alpha,u)}{1-v(\alpha,u)}  \right)+1\right\}
p_u^\alpha(1)^2 A_2\left( (W_3)_{p^\alpha_u},  (W_2)_{p^\alpha_u}\right).
\end{align*}
%%%
Setting $\alpha(u):=1/(1+u)$, 
we have   $v(\alpha(u),u)=1/2$ and 
\begin{align*}
 A_1\left( Z_{1/2},Z_{1/2}\right)
%%%%
&=2 p_u^{\alpha(u)}(1)^2 A_2\left( (W_3)_{p^{\alpha(u)}_u},  (W_2)_{p^{\alpha(u)}_u}\right),
\end{align*}
where  we used  $r(1)=1$. 
Since 
$p^\alpha_u(1)^2A_2\left((W_3)_{p^\alpha_u},(W_2)_{p^\alpha_u}\right)$  is independent of $\alpha \in (0,1)$,
if we set   
\[
 \lambda:=\frac12 A_1( Z_{1/2}, Z_{1/2}),
\]
then  we obtain
\begin{align*}
 \lambda
=p_u^\alpha(1)^2 A_2\left( (W_3)_{p^\alpha_u},  (W_2)_{p^\alpha_u}\right)
\qquad\qquad
\text{for any\ }\alpha\in(0,1).
\end{align*}
Combining this with the property that $\{ p^\alpha_u\}_{\alpha, u\in(0,1)}=\mathcal{P}_2$ and \eqref{sym} lead to
\[
A_2\left( (W_3)_{q},  (W_2)_{q}\right)=\frac{\lambda}{(q(1))^2}, 
\quad
A_2\left( (W_3)_{q},  (W_1)_{q}\right)=\frac{\lambda}{(q(2))^2}
\qquad
\text{for any\ }q\in \mathcal{P}_2.
\]
By \eqref{ratio}, it turns out that
\begin{align*}
 A_1(Z_u, Z_u)
&=\left(\frac{p_u^\alpha(1)}{4u^2}\right)^2 A_2\left( (W_3)_{p^\alpha_u},  (W_3)_{p^\alpha_u}\right) \\
&=\frac{\alpha^2}{4}A_2\left( (W_3)_{p^\alpha_u}, (W_1)_{p^\alpha_u}+ (W_2)_{p^\alpha_u}\right)
=
\frac{\lambda}{4}\left(\frac1{u^2}+\frac1{(1-u)^2}\right).
\end{align*}
%%%%%
Then we apply \eqref{A} together with \eqref{Z} for $X,Y \in\mathfrak{X}(\mathcal{P}_n) $
to have 
%%%%%
\begin{align*}
A_n(X_p, Y_p)
&=\sum_{i\in \Omega_n}X^i_pY^i_p
A_n
\left(
\frac{d H_p^{n+1i} \left(Z_{u^i}\right)}{p(n+1)+p(i)},
\frac{d H_p^{n+1i} \left(Z_{u^i}\right)}{p(n+1)+p(i)} \right)\\
&\quad +
\sum_{\substack{i,j\in \Omega_n, \\ i\neq j}}X^i_pY^j_p
A_n\left(
\frac{dH_p^{n+1\,ij}\left( \left(W_3\right)_{q^{ij}}\right)}{p(n+1)+p(i)+p(j)},
\frac{dH_p^{n+1\,ij}\left( \left(W_2\right)_{q^{ij}}\right)}{p(n+1)+p(i)+p(j)} \right)\\
%%%
&=\sum_{i\in \Omega_n}\frac{4X^i_pY^i_p}{(p(i)+p(n+1))^2}
\cdot
\frac{\lambda}{4}\left(\frac{1}{(u^i)^2}+\frac{1}{(1-u^i)^2}\right)\\
%%%%
&\quad+
\sum_{\substack{i,j\in \Omega_n, \\ i\neq j}}\frac{X^i_p Y^j_p}{(p(n+1)+p(i)+p(j))^2} \cdot
\frac{\lambda}{q^{ij}(1)^2}\\
%%%
&=
\lambda \sum_{i\in \Omega_n}\frac{X^i_p}{p(n+1)}\frac{Y^i_p }{p(n+1)}
+
\lambda \sum_{i\in \Omega_n}\frac{X^i_p}{p(i)}\frac{Y^i_p }{p(i)}
%%%%
+
\lambda\sum_{\substack{i,j\in \Omega_n, \\ i\neq j}} \frac{X^i_p}{p(n+1)}\frac{Y^j_p}{p(n+1)}\\
%%%
%%%
&=\lambda \sum_{i\in \Omega_{n+1}}\frac{X^i_p}{p(i)}\frac{Y^i_p }{p(i)},
\end{align*}
which proves (1).
%%%%

%%%%%%%%%%%%%%%%%%%%%%%
%%%%%%%

%%%%%%%%
To prove (2),  assume {\bf (C2)} and set 
\[
M(u):=\mathrm{sgn}(2u-1) \sqrt{A_1( Z_u, Z_u) }
\quad
\text{for any\ }u\in(0,1).
\]
By Proposition~\ref{invariant}~(1), we see that $M(u)=-M(1-u)$.
In particular, $M(1/2)=0$.
For $q\in \mathcal{P}_{2}$, 
set
\begin{align*}
u:=\frac{q(1)}{q(1)+q(2)}, \quad v:=\frac{q(1)}{q(1)+q(3)},  \quad w:=\frac{q(3)}{q(3)+q(2)}.
\end{align*}
Then we observe from Proposition~\ref{key} that
\[
\frac{dH_q^{12}(Z_u)}{q(1)+q(2)}
=\frac12\left( W_3  \right)_q
=
\frac12\left(  W_1 \right)_q
+
\frac12\left(  W_2 \right)_q
=
\frac{dH_q^{32}( Z_w)}{q(3)+q(2)}
+
\frac{dH_q^{13}( Z_{v})}{q(1)+q(3)},
\]
%%%%%
which implies 
\begin{align*}
\left( \frac{M(u)}{q(1)+q(2)}\right)^2
&=\frac{A_{1}( Z_u, Z_u)}{\{ q(1)+q(2)\}^2}
=A_2 \left(\frac{dH_q^{12}( Z_u)}{q(1)+q(2)}, \frac{dH_q^{12}( Z_u)}{q(1)+q(2)}\right)\\
%%%
&=
A_2 \left(\frac{dH_q^{32}( Z_{w})}{q(3)+q(2)}, \frac{dH_q^{32}( Z_{w})}{q(3)+q(2)}\right)
+
A_2 \left(\frac{dH_{\psi^\sigma_u(\alpha)}^{13}( Z_{v})}{q(1)+q(3)}, \frac{dH_q^{13}( Z_{v})}{q(1)+q(3)}\right)\\
&\qquad+
2A_2 \left(\frac{dH_q^{32}( Z_w)}{q(3)+q(2)}, \frac{dH_q^{13}( Z_v)}{q(1)+q(3)}\right).
%%%
\end{align*}
We apply {\bf (C2)} for $\sigma=\mathrm{id}\in \mathfrak{S}_3$ and $\alpha=q(1)+q(2)\in(0,1)$, namely
\[
q=\psi_u^{\mathrm{id}}(\alpha),\qquad
u^{13}_{\alpha}=\frac{q(1)}{q(1)+q(3)}=v, \qquad
u^{32}_{\alpha}=\frac{q(3)}{q(3)+q(2)}=w,
\]
to have
\begin{align*}
A_2 \left(\frac{dH_q^{32}( Z_w)}{q(3)+q(2)},\frac{dH_q^{13}( Z_v)}{q(1)+q(3)}\right)
&=\frac{M(w)}{q(3)+q(2)}\cdot \frac{ M(v)}{q(3)+q(1)}.
\end{align*}
%%%%
These  yield 
\begin{align*}
\left( \frac{M(u)}{q(1)+q(2)}\right)^2
%%%
&=\left(\frac{M(w)}{q(3)+q(2)}\right)^2+
\left(\frac{M(v)}{q(3)+q(1)}\right)^2+
2\frac{M(w)}{q(3)+q(2)}\cdot\frac{ M(v)}{q(3)+q(1)}\\
%%%%%%%%
&=
\left(
\frac{M(w)}{q(2)+q(3)}
+
 \frac{M(v)}{q(1)+q(3)}
\right)^2.
\end{align*}
%%%%%
It follows from the relation
\[
\{(1-v)(1-w)+w\}\cdot q(j)
=\begin{cases} {vw} & j=1,\\  (1-v)(1-w) &j=2, \\ {(1-v)w} & j=3,
\end{cases}
\]
together with the choice  $v,w \leq 1/2$ or $v,w\geq 1/2$ that 
\begin{align*}
%%%%
M(u)
&= M\left( \frac{vw}{vw+(1-v)(1-w)}\right)\\
&=
\frac{vw+(1-v)(1-w)}{1-v}M(w)+\frac{vw+(1-v)(1-w)}{w}M(v).
\end{align*}
%%%%
%%%%%%%%%%
Replacing  $v$ and $w$ in the above equation, we find that
\[
\left(\frac{1}{w}-\frac{1}{1-w}\right) M(v)
=
\left(\frac{1}{v}-\frac{1}{1-v}\right) M(w).
\]
This with the relation $M(u)=-M(1-u)$ implies 
\[
M(u)^2
= 
\frac{ M(\frac13)^2 \left(\frac{1}{u}-\frac{1}{1-u}\right)^2}{\left(\frac{1}{\frac13}-\frac{1}{1-\frac13}\right)^{2}}
=\frac{4}{9}M\left(\frac13\right)^2\left(\frac{1}{u}-\frac{1}{1-u}\right)^2
\quad
\text{for any\ }u\in (0,1).
\]
%%%%%
Thus we have 
\[
M(u)= \frac{\sqrt{\mu}}{2} \cdot \left(\frac{1}{1-u}-\frac1u\right)
\quad \text{for\ }u\in (0,1),
\quad \text{where}\ \mu:=\frac{16}{9}M\left(\frac13\right)^2\geq 0.
\]
%%%%%
We apply \eqref{A} and  \eqref{Z} together with {\bf (C2)} for $X,Y \in\mathfrak{X}(\mathcal{P}_n)$ to have
%%%%%
\begin{align*}
A_n(X_p, Y_p)
%%%%%%
&=
\sum_{i,j\in \Omega_{n}}
X^i_p Y^j_p
A_2\left(\frac{2dH^{12}_{q^{ij}}(Z_{u^i})}{p(n+1)+p(i)},
\frac{2dH^{13}_{q^{ij}}(Z_{u^j})}{p(n+1)+p(j)}\right)\\
%%%%%%
&=
\sum_{i,j\in \Omega_{n}}
X^i_p Y^j_p
\frac{2M(u^i)}{p(n+1)+p(i)}
\frac{2M(u^j)}{p(n+1)p(j)}\\
%%%%
&=
\mu\sum_{i,j\in \Omega_{n}}
X^i_p Y^j_p
\left( \frac{1}{p(n+1)}-\frac{1}{p(i)}\right)
\left( \frac{1}{p(n+1)}-\frac{1}{p(j)}\right)\\
&=\mu \sum_{i,j\in \Omega_{n+1}}\frac{X^i_p}{p(i)}\frac{Y^j_p}{p(j)},
\end{align*}
where we used the fact $\sum_{i\in \Omega_{n+1}} X_p^i=0$ and $\sum_{i\in \Omega_{n+1}} Y_p^i=0$
in the last equality.
This completes the proof of (2).
\end{proof}
%%%%%%%%%%%%
\begin{remark} 
In  Theorems~\ref{cencov}, 
we  impose the continuity of $\{A_n\}_{n\in \mathbb{N}}$ in addition to the invariance under Markov embeddings.
%%%%
In Theorem~\ref{diag}, 
we do not need  the continuity of $\{A_n\}_{n\in \mathbb{N}}$, and the proof is  essentially different from that of  Theorem~\ref{cencov}.
%%%%
The behavior of $A_1$ and  $A_2$ is essential 
to define  a family $\{A_n\}_{n\in \mathbb{N}}$ of invariant $(0,2)$-tensor fields on $\mathcal{P}_{n}$ under scalar patched Markov embeddings.
\end{remark}
%%%%%
\begin{ack}
{\small
The both authors express their sincere gratitude to Professor Akio Fujiwara for his valuable comments on sufficient statistics.
The authors also  would like to thank Professor Hitoshi Furuhata for stimulating discussion. 
HM was supported in part by JSPS Grant-in-Aid for Scientific Research (KAKENH) 19K03489.
AT was supported in part by KAKENHI 19K03494, 19H01786.
}
\end{ack}
%%%%%%%%%%%%%%%%%%%%%%%%%%%%%%%%%%%%%
\begin{bibdiv}
  \begin{biblist}
\bib{AJLS}{article}{
   author={Ay, Nihat},
   author={Jost, J\"{u}rgen},
   author={L\^{e}, H\^{o}ng V\^{a}n},
   author={Schwachh\"{o}fer, Lorenz},
   title={Information geometry and sufficient statistics},
   journal={Probab. Theory Related Fields},
   volume={162},
   date={2015},
   number={1-2},
   pages={327--364},
   issn={0178-8051},
%   review={\MR{3350047}},
%   doi={10.1007/s00440-014-0574-8},
}
\bib{AJLS-book}{book}{
   author={Ay, Nihat},
   author={Jost, J\"{u}rgen},
   author={L\^{e}, H\^{o}ng V\^{a}n},
   author={Schwachh\"{o}fer, Lorenz},
   title={Information geometry},
   series={Ergebnisse der Mathematik und ihrer Grenzgebiete. 3. Folge. A
   Series of Modern Surveys in Mathematics [Results in Mathematics and
   Related Areas. 3rd Series. A Series of Modern Surveys in Mathematics]},
   volume={64},
   publisher={Springer, Cham},
   date={2017},
   pages={xi+407},
   isbn={978-3-319-56477-7},
   isbn={978-3-319-56478-4},
%   review={\MR{3701408}},
%   doi={10.1007/978-3-319-56478-4},
}
\bib{Ca}{article}{
   author={Campbell, L. L.},
   title={An extended \v{C}encov characterization of the information metric},
   journal={Proc. Amer. Math. Soc.},
   volume={98},
   date={1986},
   number={1},
   pages={135--141},
%   issn={0002-9939},
%  review={\MR{848890}},
%   doi={10.2307/2045782},
}
  \bib{Ce}{book}{
   author={\v{C}encov, N. N.},
   title={Statistical decision rules and optimal inference},
   series={Translations of Mathematical Monographs},
   volume={53},
   note={Translation from the Russian edited by Lev J. Leifman},
   publisher={American Mathematical Society, Providence, R.I.},
   date={1982},
   pages={viii+499},
   isbn={0-8218-4502-0},
%   review={\MR{645898}},
}
\end{biblist}
\end{bibdiv}
\end{document}